\newtheorem{Theorem}{Theorem}[section]
\newtheorem{Lemma}[Theorem]{Lemma}
\newtheorem{Corollary}[Theorem]{Corollary}
\theoremstyle{definition}
\newcommand{\db}{\overline\partial}
\newcommand{\wi}{\widetilde}
\DeclareMathOperator{\ric}{Ric}
\DeclareMathOperator{\supp}{supp}
\DeclareMathOperator{\dist}{dist}
\DeclareMathOperator{\re}{Re}
\newcommand{\cali}[1]{\mathscr{#1}}
\newcommand{\cO}{\cali{O}} \newcommand{\cE}{\cali{E}}
\newcommand{\cC}{\cali{C}}
\newcommand{\field}[1]{\mathbb{#1}}
\newcommand{\R}{\field{R}}
\newcommand{\C}{\field{C}}
\newcommand{\N}{\field{N}}
\newcommand{\boldsym}[1]{\boldsymbol{#1}}
\newcommand\bb{\boldsym{b}}
\newcommand{\comment}[1]{}
\begin{document}

\title[On the first order asymptotics of partial Bergman kernels]
{On the first order asymptotics of partial Bergman kernels}
\author{Dan Coman}
\thanks{D.\ Coman is partially supported by the NSF Grant DMS-1300157}
\address{Department of Mathematics, 
Syracuse University, Syracuse, NY 13244-1150, USA}
\email{dcoman@syr.edu}
\author{George Marinescu}
\address{Univerisit\"at zu K\"oln, Mathematisches institut,
Weyertal 86-90, 50931 K\"oln, Germany 
\newline\mbox{\quad}\,Institute of Mathematics `Simion Stoilow', 
Romanian Academy, Bucharest, Romania}
\email{gmarines@math.uni-koeln.de}
\thanks{G.\ Marinescu acknowledges the support of the Syracuse University,
where part of this paper was written.}
\thanks{Funded through the Institutional Strategy of the University of Cologne within the German Excellence Initiative}
\subjclass[2010]{Primary 32L10; 
Secondary 32A60, 32C20, 32U40, 81Q50.}
\keywords{Bergman kernel function, singular Hermitian metric}
\date{December 26, 2015}

\pagestyle{myheadings}

\begin{abstract} 
We show that under very general assumptions the partial Bergman kernel function of sections
vanishing along an analytic hypersurface has exponential decay in 
a neighborhood of the vanishing locus.
Considering an ample line bundle, we obtain a uniform estimate of the Bergman kernel function associated to a singular metric along the hypersurface.
Finally, we study the asymptotics of the partial Bergman kernel function
on a given compact set and near the vanishing locus.
\end{abstract}

\maketitle
\tableofcontents

\section{Introduction}\label{S:intro}
Partial Bergman kernels were recently studied in different contexts, especially
K\"ahler geometry \cite{RS:13,PoSi:14, RWN:14} or random polynomials
\cite{Be07,SZ04}.

Let us consider the following general setting.

\medskip

(A) $(X,\omega)$ is a compact Hermitian manifold of dimension $n$, $\Sigma$ is a smooth analytic hypersurface of $X$, and $t>0$ is a fixed real number.

\smallskip

(B) $(L,h)$ is a singular Hermitian holomorphic line bundle on $X$ with singular metric $h$ which has locally bounded weights.

\medskip

\noindent
We define the space
\begin{equation}\label{e:s1.1}
H^0_0(X,L^p):=H^0\big(X, L^p\otimes\mathcal{O}\big(-\lfloor tp\rfloor\Sigma\big)\big)
\end{equation}
of holomorphic sections of the $p$-th tensor power $L^p$ vanishing to order at least $\lfloor tp\rfloor$
along $\Sigma$, where $\lfloor x\rfloor$ denotes the integral part of $x\in\R$. 
Set $d_{p}=\dim H^0(X,L^p)$ and $d_{0,p}=\dim H^0_0(X,L^p)$.
We introduce on $H^0(X,L^p)$ the $L^2$ inner product $(\cdot\,,\cdot)_p$
induced by the 
metric $h_p=h^{\otimes p}$ and the volume form $\omega^n/n!$\,, see \eqref{e:ip}. 
This inner product is inherited by $H^0_0(X,L^p)$. 
The (full) Bergman kernel function is defined by taking an orthonormal basis 
$\{S_j^p:1\leq j\leq d_{p}\}$ of $(H^0(X,L^p),(\cdot\,,\cdot)_p)$ and setting
\begin{equation}\label{e:Bk}
P_{p}(x)=\sum_{j=1}^{d_{p}}|S^p_j(x)|_{h_p}^2\,,\;\;|S^p_j(x)|_{h_p}^2:=
\langle S_j^p(x),S_j^p(x)\rangle_{h_p},\;x\in X.
\end{equation}
By considering an orthonormal basis $\{S_j^p:1\leq j\leq d_{0,p}\}$
of $(H^0_0(X,L^p),(\cdot\,,\cdot)_p)$, we define the \emph{partial Bergman kernel function} $P_{0,p}$ by 
\begin{equation}\label{e:pBk}
P_{0,p}(x)=\sum_{j=1}^{d_{0,p}}|S^p_j(x)|_{h_p}^2\,,\;x\in X.
\end{equation}
Note that this definition is independent of the choice of basis, cf.\ \eqref{e:pPvar}. 

The asymptotics of the Bergman kernel function for a positive line bundle $(L,h)$ 
\cite{Catlin,Zelditch98}, see also \cite{MM07} for a comprehensive study, 
is very important in understanding the Yau-Tian-Donaldson conjecture. 
On the other hand, partial Bergman kernels are useful in connection to 
the slope semi-stability with respect to a submanifold \cite{RoTh:06}. 
On a toric variety $X$ (and for a toric $\Sigma$) 
this study was carried out in \cite{PoSi:14}. 
In this context it is shown that the partial Bergman kernel has an asymptotic expansion, 
having rapid decay of order $p^{-\infty}$ in a neighborhood $U(\Sigma)$ of $\Sigma$, 
and giving the full Bergman kernel function to order $p^{-\infty}$ outside the closure of 
$U(\Sigma)$. Moreover \cite{PoSi:14} gives a complete distributional asymptotic expansion 
on $X$, whose
leading term has an additional Dirac delta measure plus a dipole measure
over $\partial U(\Sigma)$. These results were generalized in \cite{RS:13} 
and \cite{ZZ16} to the case
when the data in question are invariant under an $S^1$-action. 

In general, if no symmetry is assumed, it was shown in \cite[Theorem\,4.3]{Be07}
that if the bundle $L\otimes\cO(-\Sigma)$ is ample, there exists a neighborhood
$U(\Sigma)$ of $\Sigma$, such that $P_{0,p}(x)$ has exponential decay on $U(\Sigma)$
and $p^{-n}P_{0,p}(x)$ converges to $c_1(L,h)^n/\omega^n$ in $L^1$ outside the closure of 
$U(\Sigma)$. 

Our first result is that under the very general hypotheses (A) and (B) above 
(in particular, without any positivity condition), 
the partial Bergman kernel function decays exponentially in a neighborhood of the divisor $\Sigma$.
\begin{Theorem}\label{T:mt1}
Assume that conditions (A)-(B) are fulfilled.
Then there exist a neighborhood $U_t$ of $\Sigma$ and a constant $a\in(0,1)$ such
that $P_{0,p}\leq a^p$ on $U_t$ for $p>2t^{-1}$. 
In particular $P_{0,p}=O(p^{-\infty})$ as $p\to\infty$ on $U_t$.
\end{Theorem}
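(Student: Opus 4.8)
\textit{Proof proposal.} The plan is to combine the extremal characterization of the Bergman kernel function with a local estimate along a finite cover of $\Sigma$. Recall that
$$P_{0,p}(x)=\sup\big\{|s(x)|^2_{h_p}:s\in H^0_0(X,L^p),\ \|s\|_p\le 1\big\}$$
(with $P_{0,p}\equiv 0$ if $H^0_0(X,L^p)=0$), so it is enough to produce a neighborhood $U_t$ of $\Sigma$ and $a\in(0,1)$ with $|s(x)|^2_{h_p}\le a^p$ for every unit-norm $s\in H^0_0(X,L^p)$, every $x\in U_t$, and every $p>2t^{-1}$. Since $\Sigma$ is compact and smooth, we fix a finite cover of $\Sigma$ by relatively compact coordinate charts $U_\alpha$ on which $L$ is trivial with frame $e_\alpha$, $\Sigma\cap U_\alpha=\{w_\alpha=0\}$ for a single coordinate function $w_\alpha$, the weight $\varphi_\alpha$ of $h$ relative to $e_\alpha$ satisfies $|\varphi_\alpha|\le M_\alpha$ (using that $h$ has locally bounded weights), and $\omega^n/n!$ is comparable to Lebesgue measure $dV$ on $U_\alpha$; the estimate will be established on a thin neighborhood of $\Sigma\cap U_\alpha$ inside each $U_\alpha$.

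Fix $\alpha$ and a unit-norm $s\in H^0_0(X,L^p)$, and write $s=f_\alpha e_\alpha^{\otimes p}$ on $U_\alpha$. The vanishing condition gives $f_\alpha=w_\alpha^{\lfloor tp\rfloor}g_\alpha$ with $g_\alpha$ holomorphic, and $|s|^2_{h_p}=|f_\alpha|^2e^{-2p\varphi_\alpha}$; from $\|s\|_p=1$ and comparability of the volume forms we get $\int_{U_\alpha}|f_\alpha|^2\,dV\le C_\alpha e^{2pM_\alpha}$. Now take $x=(w_0,z_0')\in U_\alpha$ with $|w_0|=\delta$ small and $z_0'$ in a suitable coordinate ball, and consider the holomorphic function $\psi(w):=f_\alpha(w,z_0')$ on a disc $\{|w|\le R\}$, $R$ a fixed radius with that disc inside $U_\alpha$. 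Since $\psi$ vanishes to order $\ge\lfloor tp\rfloor$ at $w=0$, the maximum principle applied to $w\mapsto\psi(w)/w^{\lfloor tp\rfloor}$ yields $|\psi(w_0)|\le(\delta/R)^{\lfloor tp\rfloor}\sup_{|w|\le R}|\psi|$, while the sub-mean value inequality for the plurisubharmonic function $|f_\alpha|^2$ bounds $\sup_{|w|\le R}|\psi|^2$ by a constant times $\int_{U_\alpha}|f_\alpha|^2\,dV$. Combining, and using $|\varphi_\alpha|\le M_\alpha$ once more,
$$|s(x)|^2_{h_p}\le e^{2pM_\alpha}|\psi(w_0)|^2\le C_\alpha'\,(\delta/R)^{2\lfloor tp\rfloor}e^{4pM_\alpha}.$$

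To finish, observe that for $p>2t^{-1}$ one has $2\lfloor tp\rfloor\ge 2tp-2\ge tp$, so if $\delta<R$ then $(\delta/R)^{2\lfloor tp\rfloor}\le(\delta/R)^{tp}$ and $|s(x)|^2_{h_p}\le C_\alpha'\big[(\delta/R)^te^{4M_\alpha}\big]^p$. It therefore suffices to choose $U_t$ so thin that on $U_t\cap U_\alpha$ we have $|w_\alpha|\le\delta_\alpha$ with $b_\alpha:=(\delta_\alpha/R_\alpha)^te^{4M_\alpha}$ satisfying $b_\alpha\le a$ and $C_\alpha'b_\alpha\le a$ for a fixed $a\in(0,1)$; then $C_\alpha'b_\alpha^p\le a^p$ for all $p\ge 1$, and taking the union over the finitely many $\alpha$ (with the largest of the resulting $a$'s, still $<1$) gives $P_{0,p}\le a^p$ on $U_t$ for $p>2t^{-1}$, whence $P_{0,p}=O(p^{-\infty})$ since $a^p=o(p^{-k})$ for every $k$. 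The one delicate point is exactly this balancing: a naive sub-mean value bound for $|s(x)|^2_{h_p}$ over a ball of fixed radius is useless, because the $L^2$-mass of $s$ can concentrate near $\Sigma$; one must first cash in the vanishing in the direction transverse to $\Sigma$ through the one-variable maximum principle to extract the gain $(\delta/R)^{\lfloor tp\rfloor}$, which overcomes the unavoidable loss $e^{cpM_\alpha}$ from the (possibly strongly negative) weight precisely once the neighborhood is taken thin enough, using $\lfloor tp\rfloor\gtrsim tp$. \hfill$\qed$
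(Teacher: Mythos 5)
Your proposal is correct and reaches the same quantitative conclusion, but it implements the key transverse-variable step differently from the paper. The overall strategy coincides: use the extremal characterization of $P_{0,p}$, a finite cover of $\Sigma$ by coordinate charts with $\Sigma=\{w_\alpha=0\}$, bounded local weights (paying the factor $e^{4p\|h\|_\infty}$), factor out $w_\alpha^{\lfloor tp\rfloor}$, and finally use $2\lfloor tp\rfloor\geq tp$ for $p>2/t$ to absorb everything into $a^p$ on a thin neighborhood. Where you diverge is in how the vanishing is cashed in: the paper works entirely at the $L^2$ level, applying sub-averaging to $\widetilde s=s/z_1^{\lfloor tp\rfloor}$ and then its Lemma \ref{L:il} (a Parseval/power-series comparison $\int|f|^2\leq\frac{k+1}{2^{2k}}\int|\zeta|^{2k}|f|^2$ on a fixed disc), applied slice-wise via Fubini, to return from $\widetilde s$ to $s$; you instead argue pointwise on the one-dimensional slice through $x$, using the Schwarz-lemma/maximum-principle bound $|\psi(w_0)|\leq(\delta/R)^{\lfloor tp\rfloor}\sup_{|w|\leq R}|\psi|$ and then a sub-mean value estimate to control that sup by the global $L^2$ norm. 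Your route is equally elementary and gives a bound of the same strength (decay like $|w_\alpha(x)|^{2\lfloor tp\rfloor}$ against the loss $e^{4p\|h\|_\infty}$), which is exactly what the paper's Theorem \ref{T:expd} and Corollary \ref{C:expd} provide; the paper's Fubini--Parseval argument avoids taking suprema and hence sidesteps the only point where your write-up is slightly loose, namely that the sub-mean value balls around points of the slice $\{|w|\leq R\}\times\{z_0'\}$ must stay inside the region where the weight bound and volume comparison hold. This is fixed by the standard two-radius configuration (as in the paper's $\Delta^n(y_j,1)\subset\Delta^n(y_j,2)\subset W_j$), so it is a presentational detail rather than a gap; with that adjustment your choice of $\delta_\alpha$ so that $b_\alpha\leq a$ and $C'_\alpha b_\alpha\leq a$, and the union over the finitely many charts, correctly yields $P_{0,p}\leq a^p$ on $U_t$ for $p>2t^{-1}$.
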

For more precise statements see Theorem \ref{T:expd} and Corollary \ref{C:expd}. 
Theorem \ref{T:mt1} can be formulated for non-compact manifolds $X$ 
(see Theorem \ref{T:expdnc}), in which case the exponential decay 
of the partial Bergman kernel holds in a neighborhood of the intersection of $\Sigma$
with any given compact subset of 
$X$. 
This includes for instance the case of classical Bergman spaces 
of $L^2$-holomorphic functions on domains in ${\mathbb C}^n$. 

\medskip

An object which is closely related to the partial Bergman kernel is the
Bergman kernel for a singular metric. The full asymptotic expansion
on compact subsets of the regular part of the metric was established in 
\cite[Theorem\,1.8]{HsM:14}. We are here concerned with asymptotics at arbitrary points with dependence
on the distance to the singular set. More precisely, we will consider the following
situation. 

Let $S_\Sigma\in H^0(X,\mathcal{O}(\Sigma))$ be a canonical holomorphic section of
the line bundle $\mathcal{O}(\Sigma)$, vanishing to first order on $\Sigma$.
We fix a smooth Hermitian metric $h_\Sigma$ on $\mathcal{O}(\Sigma)$ such that
\begin{equation}\label{e:rho}
\varrho:=\log\big|S_\Sigma\big|_{h_\Sigma}<0\:\:\text{on $X$}.
\end{equation}
We consider a function $\xi:X\to\R\cup\{-\infty\}$, smooth on $X\setminus\Sigma$, 
such that $\xi=t\rho$ in a neighborhood $U$ of $\Sigma$. 
Let $\dist(\cdot\,,\cdot)$ be the distance on $X$ induced  by $\omega$. Our main
result is the following:
\begin{Theorem}\label{T:mt2}
Let $(X,\omega),(L,h),\Sigma$ be as in (A)-(B), and assume $\omega$ is K\"ahler and $h$ is smooth. Consider the singular Hermitian metric $\widetilde{h}=he^{-2\xi}$ on $L$ and assume that $c_1(L,\widetilde{h})\geq\varepsilon\omega$ for some constant $\varepsilon>0$. Let $\widetilde{P}_p$ be the Bergman kernel function of $H^0_{(2)}(X,L^p,\widetilde{h}_p,\omega^n/n!)$, where $\widetilde{h}_p:=\widetilde{h}^{\otimes p}$. There exists a constant $C>1$ such that for every $x\in X\setminus\Sigma$ and every $p\in\N$ with $p\dist(x,\Sigma)^{8/3}>C$ we have
\begin{equation}\label{e:Bexp00}
\left|\frac{\widetilde{P}_p(x)}{p^n}\,\frac{\omega^n_x}{c_1(L,\widetilde{h})^n_x}-1\right|
\leq Cp^{-1/8}\,.
\end{equation}
\end{Theorem}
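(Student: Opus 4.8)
\smallskip
\noindent\textbf{Proof strategy.}
The plan is to prove \eqref{e:Bexp00} by localizing $\widetilde P_p$ near $x$ at scale $p^{-1/2}$, comparing with the model (Bargmann--Fock) Bergman kernel, and tracking how the error depends on $\delta:=\dist(x,\Sigma)$. On a coordinate ball $B(x,r)$ with $r<\delta$ the metric $\widetilde h_p$ is smooth, its weight there being that of $h_p$ plus $2tp\,\varrho$ with $\varrho=\log|S_\Sigma|_{h_\Sigma}$; since $\varrho$ is pluriharmonic on $X\setminus\Sigma$ modulo a smooth function, the curvature of $\widetilde h_p$ on $B(x,r)$ is the bounded form $pc_1(L,\widetilde h)$, but $|\nabla\varrho|\asymp\delta^{-1}$ there. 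This last growth is what will constrain $r$ and produce the condition $p\delta^{8/3}>C$.

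For the upper bound I would combine the extremal description $\widetilde P_p(x)=\sup\{|S(x)|^2_{\widetilde h_p}:S\in H^0_{(2)}(X,L^p,\widetilde h_p),\ \|S\|_{\widetilde h_p}=1\}$ with the restriction inequality: such an $S$ restricts to a holomorphic section on $B(x,r)$ of $\widetilde h_p$-norm $\le 1$, so $|S(x)|^2_{\widetilde h_p}$ is at most the value at the center of the local Bergman kernel of $(B(x,r),\widetilde h_p)$. Absorbing the pluriharmonic part of the weight by a local holomorphic gauge change and rescaling $z=x+w/\sqrt p$ reduces this to the model kernel, giving $\widetilde P_p(x)\le p^n\, c_1(L,\widetilde h)^n_x/\omega^n_x\,(1+\text{error})$, the error being controlled by the third order Taylor remainder of the weight over $B(x,r)$ and hence harmless for $p^{-1/2}\ll r\ll p^{-1/3}$ (compatible with $r<\delta$ under the hypothesis).

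The lower bound is the delicate half: I would produce a global section $S=\chi\sigma-u\in H^0_{(2)}(X,L^p,\widetilde h_p)$ peaking at $x$. On $B(x,r)$ take a holomorphic model section $\sigma$ of $L^p$ which is a Gaussian peak at $x$ for $\widetilde h_p$; since membership in $H^0_{(2)}$ forces vanishing of order $\ge\lfloor tp\rfloor$ along $\Sigma$, it is convenient to build it near $\Sigma$ in the form $\sigma=S_\Sigma^{\lfloor tp\rfloor}\otimes\tau$, with $\tau$ a Gaussian peak for the \emph{smooth} metric $h_p\otimes h_\Sigma^{-\lfloor tp\rfloor}$ on $L^p\otimes\mathcal{O}(-\lfloor tp\rfloor\Sigma)$ (whose curvature at $x$ differs from $pc_1(L,\widetilde h)_x$ by $O(1)$). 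With a cutoff $\chi$ supported in $B(x,r)$ one has $\db(\chi\sigma)=(\db\chi)\sigma$, concentrated on the annulus where $|\sigma|_{\widetilde h_p}$ is exponentially small; solving $\db u=(\db\chi)\sigma$ with H\"ormander--Demailly $L^2$ estimates (using the positivity $c_1(L,h)\ge\varepsilon\omega$ and, near $\Sigma$, the singular positivity of $\widetilde h_p$) gives $u$ with $\|u\|_{\widetilde h_p}$ exponentially small. Being $L^2$ with respect to the singular metric $\widetilde h_p$, the correction $u$ vanishes to order $\ge\lfloor tp\rfloor$ along $\Sigma$, so $S=\chi\sigma-u$ genuinely lies in $H^0_{(2)}(X,L^p,\widetilde h_p)$ and $\widetilde P_p(x)\ge|S(x)|^2_{\widetilde h_p}/\|S\|^2_{\widetilde h_p}$.

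The main obstacle, and the origin of the exponents $1/8$ and $8/3$, is the error in the lower bound coming from $\widetilde h_p$ near $\Sigma$ over the ball $B(x,r)$. With $\sigma=S_\Sigma^{\lfloor tp\rfloor}\otimes\tau$ one has $|\sigma|^2_{\widetilde h_p}=|S_\Sigma|_{h_\Sigma}^{-2(tp-\lfloor tp\rfloor)}|\tau|^2$ near $x$, and the factor $|S_\Sigma|_{h_\Sigma}^{-2(tp-\lfloor tp\rfloor)}$, comparable to $\delta^{-2(tp-\lfloor tp\rfloor)}$ at $x$ but with logarithmic derivative of size $\asymp\delta^{-1}$, varies across $B(x,r)$ and perturbs the ratio $|S(x)|^2_{\widetilde h_p}/\|S\|^2_{\widetilde h_p}$ away from the model value by an amount of order $r/\delta$. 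Balancing this against the model error $O(pr^3)$ and the exponentially small tail and correction errors, the optimal choice $r\asymp p^{-1/2}$ produces a total relative error of order $p^{-1/2}\delta^{-1}$; imposing $p^{-1/2}\delta^{-1}\lesssim p^{-1/8}$, i.e.\ $p\,\delta^{8/3}>C$, then yields the two-sided estimate \eqref{e:Bexp00}.
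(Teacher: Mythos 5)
Your overall scheme (localize near $x$, pass to a quadratic model after a holomorphic gauge change, upper bound by sub-averaging, lower bound by a cut-off peak section corrected via Demailly--H\"ormander $\db$-estimates) is the same as the paper's, but the quantitative part breaks down at two points. First, the scale $r\asymp p^{-1/2}$ is not admissible: at that scale $pr^2\asymp 1$, so the Gaussian mass outside the ball and the $\db$-correction (whose relative size is of order $1/(p\varepsilon r^2)$ and $e^{-cpr^2}$) are $O(1)$, not ``exponentially small'', contradicting your own requirement $p^{-1/2}\ll r\ll p^{-1/3}$ stated for the upper bound. One must take $r=r_p$ with $pr_p^2\to\infty$ and $pr_p^3\to0$; the paper takes $r_p=p^{-3/8}$, which makes all errors $O(p^{-1/8})$, and the exponent $8/3$ then comes solely from the constraint that the polydisk of radius $r_p$ around $x$ avoid $\Sigma$, i.e.\ $r_p\lesssim\dist(x,\Sigma)$, not from balancing an $r/\delta$ error.

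Second, your treatment of the singularity creates precisely that spurious $r/\delta$ error, and with it the scheme cannot reach the stated threshold. Writing $\sigma=S_\Sigma^{\lfloor tp\rfloor}\otimes\tau$ leaves the factor $|S_\Sigma|_{h_\Sigma}^{-2(tp-\lfloor tp\rfloor)}$, whose oscillation over $B(x,r)$ you estimate as $O(r/\delta)$; under the hypothesis $p\,\delta^{8/3}>C$ one only has $\delta\gtrsim p^{-3/8}$, and then $r/\delta$ is merely bounded for the admissible radii (to force it $\le p^{-1/8}$ you need $r\le\delta p^{-1/8}$, which for $\delta\asymp p^{-3/8}$ makes $1/(pr^2)\gtrsim 1$), so your argument would only give the estimate under a stronger hypothesis, roughly $\delta\gtrsim p^{-5/16}$. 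The remedy --- and what the paper does --- is to absorb the whole singular term: on the simply connected polydisk avoiding $\Sigma$, the function $t\log|f|$ together with the pluriharmonic quadratic part of the weight is the real part of a holomorphic function $H_x$, and the frame change $e_x=e^{H_x}e_{j,x}$ turns the weight of $\widetilde h$ into $\sum_\ell\lambda_\ell|z_\ell|^2+O(|z|^3)$ with no $\delta$-dependence whatsoever; the only role of $\dist(x,\Sigma)$ is then to limit the size of the polydisk, and $r_p=p^{-3/8}$ closes the argument (away from the fixed neighborhood of $\Sigma$ the paper simply quotes the expansion of \cite{HsM:14}). Your own remark that $\varrho$ is pluriharmonic modulo a smooth function off $\Sigma$ already contains this idea; you just did not apply it to the fractional power.
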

Theorem \ref{T:mt2} can be interpreted in two ways. First, if $x$ runs in a compact set 
$K\subset X\setminus\Sigma$, we have a concrete bound $p_0=C\dist(K,\Sigma)^{-8/3}$
such that for $p>p_0$ the estimate \eqref{e:Bexp00} holds.
By \cite[Theorem\,1.8]{HsM:14} we have 
$\widetilde{P}_p(x)=\sum_{r=0}^\infty\bb_r(x)p^{n-r}+O(p^{-\infty})$
as $p\to\infty$ locally uniformly on $X\setminus\Sigma$. Hence, there exists $p_0(K)\in\N$ and $C_K$
such that for $p>p_0(K)$ we have 
\[\left|\frac{\widetilde{P}_p(x)}{p^n}\,\frac{\omega^n_x}{c_1(L,\widetilde{h})^n_x}-1\right|
\leq C_Kp^{-1}\:\:\text{on $K$.}\] 
However, $p_0(K)$ is not easy to determine.

We can also recast Theorem \ref{T:mt2} as a uniform estimate in $p$ for the singular Bergman
kernel on compact sets of $X\setminus\Sigma$ whose distance to $\Sigma$ decreases as $p^{-3/8}$. 
Indeed, 
set $K_p=\{x\in X: \dist(x,\Sigma)\geq(C/p)^{3/8}\}$.  
Then \eqref{e:Bexp00} holds on $K_p$ for every $p$. 

We consider now the global behavior of the partial Bergman kernel. 
Given a compact set $K\subset X\setminus\Sigma$ we set
\begin{equation}\label{e:t0}
\begin{split}
t_0(K):=\sup\Big\{t>0:\exists\,\eta\in\cC^\infty(X,[0,1]),\:\supp\eta\subset X\setminus K,\:\text{$\eta=1$ near $\Sigma$},\\
\text{and $c_1(L,h)+t\,dd^c(\eta\varrho)$ is a K\"ahler current on $X$}
\Big\}.
\end{split}
\end{equation}
A consequence of Theorems \ref{T:mt1} and \ref{T:mt2} is the following result about 
the 
asymptotics of the partial Bergman kernel:
\begin{Theorem}\label{T:mt3}
Let $(X,\omega),(L,h),\Sigma$ be as in (A)-(B), and assume $\omega$ is K\"ahler, 
$h$ is smooth, and $c_1(L,h)\geq\varepsilon\omega$ for some constant $\varepsilon>0$. 
Let $K\subset X\setminus\Sigma$ be a compact set and let $t\in(0,t_0(K))$. 
Then there exist constants $C>1$, $M>1$ 
and a neighborhood $U_t$ of $\Sigma$, all depending on $t$, such that for
$x\in U_t$ we have
\begin{gather}\label{e:exp1}
Me^{t\varrho(x)}<1\: \text{and} \: P_{0,p}(x)\leq (Me^{t\varrho(x)})^p\: \text{for $p>2/t$},\\
P_{0,p}(x)\geq\frac{p^n}{C}\exp(2tp\varrho(x))\:\:\text{for $p\dist(x,\Sigma)^{8/3}>C$},
\label{e:exp2}
\end{gather}
where the function $\varrho$ is defined in \eqref{e:rho}. Moreover, we have uniformly on $K$,
\begin{equation}\label{e:exp3o}
P_{0,p}(x)=P_p(x)+O(p^{-\infty})\,,\:\:p\to\infty,
\end{equation}
and in particular,
\begin{equation}\label{e:exp3}
P_{0,p}(x)=\bb_0(x)p^n+\bb_1(x)p^{n-1}+O(p^{n-2})\,,\:\:p\to\infty,
\end{equation}
where
\begin{equation}\label{e:coeff}
\bb_0=\frac{c_1(L,h)^n}{\omega^n},\:\: \bb_1= \frac{\bb_0}{8\pi}\,(r^X-2\Delta\log\bb_0),
\end{equation}
and $r^X$, $\Delta\,$, are the scalar curvature, respectively the Laplacian, of the Riemannian metric
associated to $c_1(L,h)$. 
\end{Theorem}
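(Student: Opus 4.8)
The plan is to reduce everything to Theorems \ref{T:mt1}--\ref{T:mt2} plus one $\db$-estimate. Fix $t\in(0,t_0(K))$. By the definition of $t_0(K)$, together with convexity in $t$ using $c_1(L,h)\ge\varepsilon\omega$, there is $\eta\in\cC^\infty(X,[0,1])$ with $\eta\equiv1$ near $\Sigma$, $\supp\eta\subset X\setminus K$, and $T:=c_1(L,h)+t\,dd^c(\eta\varrho)$ a K\"ahler current, $T\ge\varepsilon'\omega$ for some $\varepsilon'>0$. Set $\xi:=t\eta\varrho$, smooth on $X\setminus\Sigma$, equal to $t\varrho$ near $\Sigma$, and $\xi\le0$ on $X$ (as $\eta\ge0$, $\varrho<0$); let $\widetilde h=he^{-2\xi}$, $\widetilde h_p=\widetilde h^{\otimes p}$. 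Near $\Sigma$ the weight satisfies $e^{-2p\xi}=|S_\Sigma|_{h_\Sigma}^{-2pt}$, so the multiplier ideal of $2p\xi$ is $\cO(-\lfloor tp\rfloor\Sigma)$, while off $\supp\eta$ one has $e^{-2p\xi}\equiv1$; hence $H^0_{(2)}(X,L^p,\widetilde h_p,\omega^n/n!)=H^0_0(X,L^p)$ and, since $e^{-2p\xi}\ge1$, $\|S\|_p\le\|S\|_{\widetilde h_p}$ on that space. Moreover $H^0_0(X,L^p)\subset H^0(X,L^p)$ with the same inner product, so $P_{0,p}\le P_p$ on $X$; and for $x$ near $\Sigma$, where $\xi(x)=t\varrho(x)$, the extremal characterisation $Q(x)=\sup\{|S(x)|^2:\|S\|\le1\}$ of a Bergman kernel function gives $\widetilde P_p(x)\le e^{-2tp\varrho(x)}P_{0,p}(x)$.

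Now \eqref{e:exp1} follows from the precise exponential decay of Corollary \ref{C:expd}, of the form $P_{0,p}(x)\le C(t)^pe^{2\lfloor tp\rfloor\varrho(x)}$ on a neighbourhood $U'$ of $\Sigma$: for $p>2/t$ we have $2\lfloor tp\rfloor\ge tp$ and $\varrho<0$, so $P_{0,p}(x)\le(C(t)e^{t\varrho(x)})^p$; put $M:=C(t)$ and choose a neighbourhood $U_t\subset U'$ of $\Sigma$ with $\eta\equiv1$ and $\varrho<-t^{-1}\log M$ on $U_t$ (possible since $\varrho\to-\infty$ on $\Sigma$), so that $Me^{t\varrho}<1$ there. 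For \eqref{e:exp2}: on $U_t\setminus\Sigma$ one has $\widetilde h=he^{-2t\varrho}$ and $c_1(L,\widetilde h)=c_1(L,h)-t\,c_1(\cO(\Sigma),h_\Sigma)\ge\varepsilon'\omega$ (the smooth part of $T$), so Theorem \ref{T:mt2} applies to $(L,\widetilde h)$ and yields $\widetilde P_p(x)\ge\tfrac12(\varepsilon')^np^n$ whenever $p\dist(x,\Sigma)^{8/3}>C$ and $Cp^{-1/8}<\tfrac12$; combined with $P_{0,p}(x)\ge e^{2tp\varrho(x)}\widetilde P_p(x)$ this gives \eqref{e:exp2} after enlarging $C$.

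The core is \eqref{e:exp3o}. Writing $P_p(x)=\|K_x\|_p^2$ for the reproducing section $K_x\in H^0(X,L^p)$ at $x$, its orthogonal projection $\Pi_{0,p}K_x$ is the reproducing section of $H^0_0(X,L^p)$, so $P_p(x)-P_{0,p}(x)=\dist_{L^2}\!\big(K_x,H^0_0(X,L^p)\big)^2$; it suffices to produce $\widehat S\in H^0_0(X,L^p)$ with $\|K_x-\widehat S\|_p=O(p^{-\infty})$ uniformly for $x\in K$. Pick open sets $\supp\eta\subset W_1\Subset W_2\Subset X\setminus K$ and $\chi\in\cC^\infty(X,[0,1])$ with $\chi\equiv0$ on $W_1$, $\chi\equiv1$ on $X\setminus W_2$; then $\supp d\chi\subset\overline{W_2}\setminus W_1$, where $\xi=t\eta\varrho\equiv0$ because $\supp\eta\subset W_1$. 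The datum $v:=(\db\chi)K_x=\db(\chi K_x)$ is $\db$-closed and supported where $\xi\equiv0$. Since the curvature current of $\widetilde h_p$ equals $p\,c_1(L,\widetilde h)=pT\ge p\varepsilon'\omega$ and $\omega$ is K\"ahler, H\"ormander's $L^2$ estimate with singular weight solves $\db u=v$ with $\int_X|u|_{h_p}^2e^{-2p\xi}\,\omega^n/n!\le\frac{2}{p\varepsilon'}\int_{\supp d\chi}|\db\chi|_\omega^2|K_x|_{h_p}^2\,\omega^n/n!$ (the weight being $1$ on $\supp d\chi$). By the standard off‑diagonal exponential decay $|K_x(y)|_{h_p}\le Cp^ne^{-c\sqrt p\,\dist(x,y)}$ of the Bergman kernel of the positive smooth bundle $(L,h)$, and since $\dist(x,\supp d\chi)\ge\dist(K,\overline{W_2})>0$, the right‑hand side is $O(p^{-\infty})$; as $e^{-2p\xi}\ge1$, also $\|u\|_p^2=O(p^{-\infty})$. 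Near $\Sigma$, where $v\equiv0$, the section $u$ is holomorphic, and finiteness of $\int|u|_{h_p}^2|S_\Sigma|_{h_\Sigma}^{-2tp}$ forces it to vanish to order $\ge\lfloor tp\rfloor$ there; since $\chi\equiv0$ near $\Sigma$, the holomorphic section $\widehat S:=\chi K_x-u$ lies in $H^0_0(X,L^p)$. Finally $\|K_x-\widehat S\|_p\le\|(1-\chi)K_x\|_p+\|u\|_p$, and $\|(1-\chi)K_x\|_p^2\le\int_{\overline{W_2}}|K_x|_{h_p}^2\,\omega^n/n!=O(p^{-\infty})$ again by off‑diagonal decay, all estimates uniform for $x\in K$. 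This proves \eqref{e:exp3o}; then \eqref{e:exp3} follows by inserting the Catlin--Zelditch--Lu diagonal expansion $P_p=\bb_0p^n+\bb_1p^{n-1}+O(p^{n-2})$, uniform on $X$ with $\bb_0,\bb_1$ as in \eqref{e:coeff} (see \cite{MM07}).

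The main obstacle is exactly the tension inside \eqref{e:exp3o}: forcing $\widehat S$ to vanish to order $\lfloor tp\rfloor$ along $\Sigma$ requires solving $\db$ against a weight with the singularity $|S_\Sigma|_{h_\Sigma}^{-2tp}$ near $\Sigma$, yet the weighted $L^2$‑norm of the datum $(\db\chi)K_x$ must not acquire an $e^{O(p)}$ factor, since that would swamp the merely $e^{-c\sqrt p}$ off‑diagonal decay of $K_x$. The resolution is to \emph{decouple} the weight $\xi=t\eta\varrho$, which carries the $\Sigma$‑singularity but vanishes identically on $\supp d\chi$, from the cutoff $\chi$, whose transition is placed in $X\setminus\supp\eta$; this is possible precisely because $\supp\eta$ can be taken inside $X\setminus K$, and keeping the associated curvature current $pT\ge p\varepsilon'\omega$ positive throughout is exactly what the hypothesis $t<t_0(K)$ secures.
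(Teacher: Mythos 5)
Your proposal is correct, and for the two exponential estimates it follows essentially the paper's route: \eqref{e:exp1} comes from Theorem \ref{T:expd}/Corollary \ref{C:expd} exactly as in the paper (with $M=A^te^{4\|h\|_\infty}$ up to renaming), and \eqref{e:exp2} comes from applying Theorem \ref{T:mt2} to $\widetilde h_t=he^{-2t\eta\varrho}$ together with the comparison $P_{0,p}\geq e^{2tp\varrho}\,\widetilde P_{t,p}$ near $\Sigma$, which is precisely the sandwich \eqref{e:bv} used in the paper. The genuine difference is in \eqref{e:exp3o}. The paper keeps the sandwich $\widetilde P_{t,p}\leq P_{0,p}\leq P_p$ near $K$ and then invokes the localization result (Theorem \ref{T:bkloc}), i.e.\ the Hsiao--Marinescu kernel asymptotics for singular metrics, to show $\widetilde P_{t,p}-P_p=O(p^{-\infty})$ on the set where $\widetilde h_t=h$. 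You instead identify $P_p(x)-P_{0,p}(x)$ with $\dist_{L^2}(K_x,H^0_0(X,L^p))^2$ for the reproducing section $K_x$, and approximate $K_x$ by $\chi K_x-u\in H^0_0(X,L^p)$, solving $\db u=(\db\chi)K_x$ against the singular weight $\widetilde h_{t,p}$ (Theorem \ref{T:db}, legitimate since $c_1(L,\widetilde h_t)\geq\varepsilon'\omega$ as currents), with the crucial and correct observation that the cutoff transition region can be placed where $\eta\varrho\equiv0$, so the weight does not inflate the datum, while the weighted integrability of $u$ forces the required vanishing order $\lfloor tp\rfloor$ along $\Sigma$. This buys you an elementary, purely $L^2$ proof of \eqref{e:exp3o} that bypasses the microlocal machinery of \cite{HsM:14}; the price is that you must import the off-diagonal decay of the Bergman kernel of the smooth positive bundle $(L,h)$ to make both $\|(\db\chi)K_x\|_p$ and $\|(1-\chi)K_x\|_p$ into $O(p^{-\infty})$ uniformly for $x\in K$ — note that the full exponential estimate $e^{-c\sqrt p\,\dist(x,y)}$ you quote is more than needed; the standard $O(p^{-\infty})$ decay away from the diagonal (e.g.\ \cite[Prop.\,4.1.5]{MM07}) suffices and is uniform on $\{\dist(x,y)\geq\dist(K,\overline{W_2})\}$. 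The paper's route, by contrast, gives the stronger $\cC^\ell$ localization for arbitrary pairs of singular metrics agreeing on an open set, which is more than Theorem \ref{T:mt3} requires. Your extra care in deducing from $t<t_0(K)$ that the \emph{same} $t$ admits a suitable $\eta$ (by convexity, mixing with $c_1(L,h)\geq\varepsilon\omega$) is a correct fix of a point the paper leaves implicit. The final step, deducing \eqref{e:exp3} and \eqref{e:coeff} from the known two-term expansion of $P_p$, coincides with the paper.
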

Hence, \eqref{e:exp1} and \eqref{e:exp2} show that on $U_t$ the exponential decay
estimate for the partial Bergman kernel function is sharp. Moreover, on $K$ the 
partial Bergman kernel function has the same asymptotics as the full Bergman kernel function
up to order $O(p^{-\infty})$. This was established in \cite[Theorem 1.1]{RS:13} 
under the additional assumption that there is an $S^1$-action in a neighborhood of $\Sigma$. 
Our method is to estimate the partial Bergman kernel $P_{0,p}$ by above and below with the 
full Bergman kernel $P_{p}$ and the singular Bergman kernel $\wi{P}_{p}$.
On the set where the singular metric $\wi{h}$ equals $h$, the kernels $\wi{P}_{p}$
and  $P_{p}$ differ by $O(p^{-\infty})$. This is shown in Theorem \ref{T:bkloc}, which gives a 
general localization result for singular Bergman kernels. Theorem \ref{T:bkloc} is a straightforward
consequence of \cite{HsM:14}.

However, in Theorem \ref{T:mt3} we do not necessarily obtain a \emph{partition} 
of the manifold $X$ in two sets, one with exponential decay \eqref{e:exp1} 
and one with ``full asymptotics'' \eqref{e:exp3o}, 
since in general $U_t\cup K\neq X$. In \cite{Be07,RS:13,PoSi:14,ZZ16} 
a partition with two different regimes was exhibited under further hypotheses. 
The approach introduced by Berman \cite[Section 4.1]{Be07} 
was to consider the equilibrium metric (or extremal envelope) $h_t$ of $h$ 
with poles along $\Sigma$ (see also \cite{RS:13}). 
The metric $h_t$ exists for $t$ sufficiently small thanks to the positivity of $(L,h)$. The local plurisubharmonic (psh) potentials of $h_t$ have Lelong number $t$ along $\Sigma$. 
It is shown in \cite[Proposition 2.13]{RS:13} that the partial Bergman kernel function $P_{0,p}$ 
has exponential decay in the forbidden region $\{h_t>h\}$ which is a neighborhood of $\Sigma$. 
Moreover, under additional symmetry assumptions, it is shown in \cite[Theorem 1.1]{RS:13} 
that $P_{0,p}$ is essentially equal to the full Bergman kernel outside the forbidden region.

In Theorem \ref{T:mt1} we show that exponential decay holds near $\Sigma$ with no 
assumption on the positivity of $(L,h)$. 
Without positivity assumptions the equilibrium metric
might not exist, but we give here a proof in the general case
based only on the sub-average inequality for holomorphic functions.
  
Our approach in Theorem \ref{T:mt3} differs 
from the envelope approach above in that we first fix a compact $K$ disjoint from $\Sigma$ and then 
construct an interval of small $t>0$ for which the corresponding partial Bergman kernel 
$P_{0,p}$ decays exponentially near $\Sigma$ and is essentially 
equal to the full Bergman kernel on $K$.

\section{Preliminaries}\label{s:prelim}

\subsection{Bergman kernel function} Let $(L,h)$ be a singular Hermitian holomorphic 
line bundle over  a compact Hermitian 
manifold $(X,\omega)$. We denote by $H^0(X,L^p)$ the space of holomorphic sections 
of  $L^p:=L^{\otimes p}$.

Let $H^0_{(2)}(X,L^p)=H^0_{(2)}(X,L^p,h_p,\omega^n/n!)$ 
be the Bergman space of $L^2$-holomorphic sections of 
$L^p$ relative to the metric $h_{p}:=h^{\otimes p}$ 
induced by $h$ and the volume form $\omega^n/n!$ on $X$, endowed with the inner product
\begin{equation}\label{e:ip}
(S,S')_{p}:=\int_{X}\langle S,S'\rangle_{h_{p}}\,\frac{\omega^n}{n!}\,,\;\,S,S'\in H^0_{(2)}(X,L^p).
\end{equation}
Set $\norm{S}_{p}^2=(S,S)_{p}$, $d_{p}=\dim H^0_{(2)}(X,L^p)$.
If $h$ has locally bounded weights (e.\,g.\ $h$ is smooth) 
we have of course $H^0_{(2)}(X,L^p)=H^0(X,L^p)$.
We have the following variational characterization of the partial Bergman kernel
 \begin{equation}\label{e:pPvar}
P_{0,p}(x)=\max\Big\{|S(x)|^2_{h_p}:\,S\in H^0_{0}(X,L^p),\;\|S\|_p=1\Big\},
\end{equation}
and similar characterizations hold for the full and singular Bergman kernel functions $P_p$
and $\wi{P}_p$.

Throughout the paper we also use the following terminology. 
For a sequence of continuous functions $f_p$ on a manifold $M$ 
we write $f_p=O(p^{-\infty})$ if for every compact subset $K\subset M$ 
and any $\ell\in\N$ there exists $C_{K,\ell}>0$ such that for all $p\in\N$ 
we have $\|f_p\|_K\leq C_{K,\ell}\,p^{-\ell}$.

\subsection{Geometric set-up}\label{ss:geoset} We prepare here the 
geometric set-up needed for the proofs of our results, by constructing a 
special neighborhood $W$ of $\Sigma$. 

\par Let \((X,\omega)\) be a compact Hermitian manifold of dimension $n$. 
Let $(U,z)$, $z=(z_1,\ldots,z_n)$, be local coordinates centered at a point $x\in X$. 
For $r>0$ and 
$y\in U$ we denote by
\[\Delta^n(y,r)=\{z\in U: |z_j-y_j|\leq r,\:j=1,\ldots,n\}\]
the (closed) polydisk of polyradius $(r,\ldots,r)$ centered at $y$. 
If $\omega$ is a K\"ahler form, the coordinates $(U,z)$ are called K\"ahler at $y\in U$ if 
$$\omega_z=\frac i2\sum_{j=1}^{n}dz_j\wedge d\overline{z}_j+O(|z-y|^2)
\:\:\text{on \(U\)}.$$

Since $\Sigma$ is compact, we can find an open cover 
$\mathcal W=\{W_j\}_{1\leq j\leq N}$ of $\Sigma$, where $W_j$ 
are Stein contractible coordinate neighborhoods centered at points 
$y_j\in\Sigma$, such that 
\begin{equation}\label{e:Wgeom}
\begin{split}
&\Delta^n(y_j,2)\subset W_j\,,\,\;\Sigma\subset W:=\bigcup_{j=1}^{N}\Delta^n(y_j,1),\\
&\Sigma\cap W_j=\big\{z\in W_j:z_1=0\big\}\,,
\:\text{for $j=1\ldots,N$},
\end{split}
\end{equation}
where $z=(z_1,\ldots,z_n)$ are the coordinates on $W_j$. 
Moreover, if $\omega$ is a K\"ahler form, we may also ensure that 
\begin{equation}\label{e:WgeomK}
\forall\,x\in\Delta^n(y_j,1),\;\exists\,z=z(x) 
\text{ coordinates on \(\Delta^n(y_j,2)\) centered at \(x\) and K\"ahler at \(x\).}
\end{equation}

As in \cite[\S2.5, Lemma 2.7]{CMM14} one can easily prove the following: 

\begin{Lemma}\label{L:rc}
Let $(X,\omega),(L,h),\Sigma,\wi{h}$ be as in Theorem \ref{T:mt2}, 
and let $\mathcal W=\{W_j\}_{1\leq j\leq N}$ be an open cover of 
$\Sigma$ verifying \eqref{e:Wgeom} and \eqref{e:WgeomK}. 
There exist constants \(C_1>1\), \(C_2>0\) and \(r_1>0\) 
with the following property: if \(j\in\{1,\ldots,N\}\), \(x\in\Delta^n(y_j,1)\) 
and \(z=z(x)\) are the coordinates on \(\Delta^n(y_j,2)\) 
given by \eqref{e:WgeomK}, then:

\par (i) $\Delta_z^n(x,r_1)\Subset\Delta^n(y_j,2)$ and for $r\leq r_1$ we have 
\begin{equation}\label{e:dm}
n!\,dm\leq(1+C_1r^2)\omega^n\,, \:\:
\omega^n\leq(1+C_1r^2)n!\,dm\,\:\:\text{on $\Delta_z^n(x,r)$},
\end{equation}
where \(dm=dm(z)\) is the Euclidean volume and $\Delta_z^n(x,\cdot)$ is the 
open polydisk relative to the coordinates $z$.
  
\par (ii) \((L,\wi{h})\) has a weight \(\varphi_x\) on \(W_j\) with 
\begin{equation}\label{e:phix}
\begin{split}
&\varphi_x=t\log|f|+\psi_x\,,\quad \psi_x\in\cC^\infty(W_j),\\
&\psi_x (z) = \re F_x(z)+ \psi_x^\prime(z) + \widetilde\psi_x(z)\,\text{ on } \Delta^n(y_j,2),
\end{split}
\end{equation}
where $f$ is a defining function for $\Sigma\cap W_j$, $F_x(z)$ is a holomorphic polynomial 
of degree $\leq 2$ in $z$, $\psi_x^\prime(z)= \sum_{\ell=1}^n \lambda_{\ell} |z_\ell|^2$,
$\lambda_{\ell}=\lambda_{\ell}(x)$,
and 
\begin{equation}\label{e:psix}
|\widetilde\psi_x(z)|\leq C_2|z|^3\,,\:\:z\in\Delta_z^n(x,r_1)\,.
\end{equation}
\end{Lemma}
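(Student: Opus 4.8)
The plan is to prove Lemma \ref{L:rc} by following the template of \cite[\S2.5, Lemma 2.7]{CMM14}, working one coordinate chart at a time and exploiting compactness of $\Sigma$ to make all constants uniform over $j$ and $x$. For part (i), I would start from the fact that in coordinates $z = z(x)$ which are K\"ahler at $x$, the metric satisfies $\omega_z = \frac{i}{2}\sum dz_\ell \wedge d\bar z_\ell + O(|z-x|^2)$; writing $\omega^n$ against the Euclidean form $n!\,dm$, the quotient is a smooth positive function equal to $1$ at $x$ with vanishing first-order Taylor part, hence of the form $1 + O(|z-x|^2)$ on a fixed neighborhood. The point is that the implied constant can be taken uniform: the coordinate changes in \eqref{e:WgeomK} range over a compact family (parametrized by $x \in \Delta^n(y_j,1)$ and $j \in \{1,\dots,N\}$), so there is a single $r_1 > 0$ with $\Delta_z^n(x,r_1)\Subset\Delta^n(y_j,2)$ for all such $x$, and a single $C_1$ controlling the $O(r^2)$ term. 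This gives both inequalities in \eqref{e:dm} at once, perhaps after shrinking $r_1$ so that $1+C_1r^2$ can absorb the inverse bound as well.

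For part (ii), the weight of $\wi h = h e^{-2\xi}$ on $W_j$ decomposes according to the decomposition $\xi = t\varrho$ near $\Sigma$: on $W_j$ the bundle $\mathcal{O}(\Sigma)$ is trivial (since $W_j$ is Stein and simply connected, and $\Sigma\cap W_j = \{z_1 = 0\}$), so $S_\Sigma = f\cdot e_j$ for a holomorphic defining function $f$ of $\Sigma\cap W_j$ and a holomorphic frame $e_j$, whence $\varrho = \log|f| + (\text{smooth})$ and the $t\log|f|$ term of $\varphi_x$ emerges. The remaining smooth part $\psi_x$ is a local weight of the smooth metric $he^{-2(\xi - t\varrho)}$, i.e.\ a smooth function on $W_j$; the claimed form $\psi_x = \re F_x + \psi_x' + \wi\psi_x$ is nothing but its second-order Taylor expansion at $x$ in the coordinates $z(x)$, where I would split the Hessian into its holomorphic-bilinear part (absorbed, together with the gradient and value, into the holomorphic polynomial $F_x$ of degree $\le 2$ via $2\re$ of the holomorphic Taylor polynomial) and its mixed part $\sum \lambda_\ell |z_\ell|^2$ — here one uses that the coordinates are K\"ahler at $x$, which after a unitary change diagonalizes the complex Hessian of $\psi_x$ at $x$ and produces exactly $\psi_x'(z) = \sum \lambda_\ell |z_\ell|^2$ with no off-diagonal $|z_k||z_\ell|$ cross terms surviving into the quadratic part. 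Taylor's theorem with the cubic remainder then gives \eqref{e:psix}, and uniformity of $C_2$ again comes from the compactness of the family of functions $\psi_x$ (bounded in $\cC^3$ on $\Delta^n(y_j,2)$, uniformly in $x,j$).

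The main obstacle, such as it is, is bookkeeping rather than anything deep: one has to be careful that the decomposition of $\xi$ near $\Sigma$ really does let $\wi h$ be written in the stated form on all of $\Delta^n(y_j,2)$ — this requires that $\Delta^n(y_j,2)\subset W_j$ lies inside the neighborhood $U$ where $\xi = t\varrho$, which should be arranged by choosing the cover $\mathcal W$ fine enough (shrinking the $W_j$), exactly as in \eqref{e:Wgeom}. One also has to track that the coordinates $z(x)$ being K\"ahler at $x$ is what kills the unwanted quadratic cross-terms and puts $\psi_x'$ in diagonal form; absent that, one would only get a general positive-semidefinite Hermitian quadratic form. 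Finally, uniformity of $C_1, C_2, r_1$ over the compact index set — both the finite set of charts and the compact parameter space of base points $x$ — should be spelled out by a standard compactness argument, since the data ($\omega$, $h$, $h_\Sigma$, $\xi$ near $\Sigma$, and the coordinate changes) all vary continuously. I would present this as a short adaptation: ``The proof follows closely that of \cite[Lemma 2.7]{CMM14}; we only indicate the changes,'' and then give the decomposition of the weight and the Taylor expansion argument above.
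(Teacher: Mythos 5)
Your proposal is correct and follows essentially the route the paper intends: the paper offers no separate argument, simply invoking the adaptation of \cite[Lemma 2.7]{CMM14}, which is exactly the Taylor-expansion-plus-compactness proof you outline (with the extra unitary rotation, which preserves the K\"ahler-at-$x$ normalization of \eqref{e:WgeomK}, used to put the Hermitian quadratic part in the diagonal form $\sum_\ell\lambda_\ell|z_\ell|^2$). The only superfluous step is insisting that $\Delta^n(y_j,2)$ lie in the set $U$ where $\xi=t\varrho$: since $\xi-t\varrho$ vanishes near $\Sigma$ and both functions are smooth off $\Sigma$, while $\varrho-\log|f|$ is smooth on $W_j$, the decomposition $\varphi_x=t\log|f|+\psi_x$ with $\psi_x\in\cC^\infty(W_j)$ already holds on all of $W_j$ without shrinking the cover.
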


\section{Exponential decay}\label{S:expd}

We prove here Theorem \ref{T:mt1}. Let $\mathcal W=\{W_j\}_{1\leq j\leq N}$ 
be the cover of $\Sigma$ and $W\supset\Sigma$ be the neighborhood of $\Sigma$ 
constructed in section \ref{ss:geoset} (see \eqref{e:Wgeom}). 
For a function $\varphi\in L^\infty_{loc}(W_j)$ 
set 
$$\|\varphi\|_{\infty,W_j}=\sup\big\{|\varphi(w)|:\,w\in\Delta^n(y_j,2)\big\}.$$
Let \((L,h)\) be a singular Hermitian holomorphic line bundle on \(X\), 
where the metric $h$ has locally bounded weights. Since \(L|_{W_j}\) is trivial, 
we fix a holomorphic frame $e_j$ of $L|_{W_j}$, and denote by \(\varphi_j\) 
the corresponding weight of \(h\) on \(W_j\), i.e. \(|e_j|_h=e^{-\varphi_j}\). Set
\begin{equation}\label{e:h-infty}
\|h\|_\infty=\|h\|_{\infty,\mathcal W}:=\max\big\{1,\|\varphi_j\|_{\infty,W_j}:
\,1\leq j\leq N\big\},
\end{equation}
and let $\varrho$ be the function defined in \eqref{e:rho}.

\begin{Theorem}\label{T:expd}
In the setting of Theorem \ref{T:mt1}, there exists a constant $A\geq1$ 
depending only on $\rho$ and $\mathcal W$ such that for any 
$S\in H^0_{0}(X,L^p)$, $x\in W$, and $p\geq1$, we have
$$|S(x)|_{h_p}^2\leq (Ae^{\rho(x)})^{2\lfloor tp\rfloor}e^{4p\|h\|_\infty}\|S\|_p^2\,.$$
Therefore, for every $x\in W$ and $p\geq1$, 
$$P_{0,p}(x)\leq (Ae^{\rho(x)})^{2\lfloor tp\rfloor}e^{4p\|h\|_\infty}\,.$$
\end{Theorem}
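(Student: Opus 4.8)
The plan is to estimate $|S(x)|_{h_p}$ pointwise by using the sub-mean value property of plurisubharmonic functions, exploiting the fact that $S$ vanishes to high order along $\Sigma$. First I would fix $x\in W$; then $x\in\Delta^n(y_j,1)$ for some $j$, and we work in the frame $e_j$ with weight $\varphi_j$. Write $S=se_j^{\otimes p}$ on $W_j$ with $s$ holomorphic; then $|S|_{h_p}^2=|s|^2e^{-2p\varphi_j}$. Since $S\in H^0_0(X,L^p)$, the function $s$ is divisible by $z_1^{\lfloor tp\rfloor}$ in the coordinates on $W_j$ (using that $\Sigma\cap W_j=\{z_1=0\}$ and $W_j$ is simply connected Stein), so $s=z_1^{\lfloor tp\rfloor}g$ with $g$ holomorphic on $W_j$.

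The key step is to compare $|z_1(x)|$ with $e^{\rho(x)}=|S_\Sigma(x)|_{h_\Sigma}$. Both are, up to bounded comparable factors on the compact set $\overline{\Delta^n(y_j,1)}$, equal to the distance-type quantity vanishing to first order on $\Sigma$: on $W_j$ the section $S_\Sigma$ is $z_1$ times a nonvanishing holomorphic function (after trivializing $\mathcal O(\Sigma)$), and $h_\Sigma$ is smooth, so there is a constant $A_j\geq 1$ with $A_j^{-1}|z_1|\leq e^{\varrho}\leq A_j|z_1|$ on $\Delta^n(y_j,1)$; take $A$ to be the max over $j$ of these together with further constants below. Hence $|z_1(x)|^{2\lfloor tp\rfloor}\leq (Ae^{\varrho(x)})^{2\lfloor tp\rfloor}$. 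Next I would bound $|g(x)|$ by the sub-mean value inequality for $|g|^2$ over a fixed polydisk $\Delta^n(y_j,1)\subset\Delta^n(y_j,2)$: $|g(x)|^2\leq c\int_{\Delta^n(y_j,2)}|g|^2\,dm$ for a uniform constant $c$ depending only on the geometry of $\mathcal W$. Then $|g|^2=|s|^2/|z_1|^{2\lfloor tp\rfloor}$, but on $\Delta^n(y_j,2)\setminus\Sigma$ one has $|z_1|\le 2$, so actually I reverse: on $\Delta^n(y_j,2)$, $|s|^2 = |z_1|^{2\lfloor tp\rfloor}|g|^2 \ge (\text{small})$ is not what we want; instead write $|g|^2 \le |s|^2 \cdot (\text{something})$ — more carefully, since $g = s/z_1^{\lfloor tp\rfloor}$ is holomorphic, its sup on $\Delta^n(y_j,1)$ is controlled by its $L^2$ norm on $\Delta^n(y_j,2)$, and there $\int |g|^2 e^{-2p\varphi_j}\,dm \le e^{2p\|h\|_\infty}\int|g|^2\,dm$ is the wrong direction too; the right move is $\int_{\Delta^n(y_j,2)}|g|^2\,dm = \int |g|^2 e^{-2p\varphi_j}e^{2p\varphi_j}\,dm \le e^{2p\|h\|_{\infty,W_j}}\int|g|^2 e^{-2p\varphi_j}\,dm$, and we must further dominate $\int|g|^2e^{-2p\varphi_j}dm$ by $\int|s|^2 e^{-2p\varphi_j}dm\le \|S\|_p^2$ (up to the volume-form comparison), which needs $|g|\le|s|\cdot$const, i.e. $|z_1|\ge$const on the region of integration — false near $\Sigma$.

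To fix the direction issue I would instead apply the sub-mean value estimate to the plurisubharmonic function $\log(|s|^2 e^{-2p\varphi_j})$ (or to $|s|^2e^{-2p\varphi_j}$ directly), not splitting off $g$ in the integral. Concretely: $|S(x)|^2_{h_p}=|s(x)|^2e^{-2p\varphi_j(x)} = |z_1(x)|^{2\lfloor tp\rfloor}|g(x)|^2 e^{-2p\varphi_j(x)}$. Bound $|g(x)|^2 e^{-2p\varphi_j(x)}\le e^{2p\|h\|_{\infty,W_j}}|g(x)|^2$; then apply the sub-mean value inequality to $|g|^2$ over $\Delta^n(y_j,2)$ to get $|g(x)|^2\le c\int_{\Delta^n(y_j,2)}|g|^2\,dm = c\int |s|^2 |z_1|^{-2\lfloor tp\rfloor}\,dm$; since $|z_1|\ge$ — no. The genuinely clean route, which I expect to be the one that works, is: use Parseval / monomial expansion. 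On the simply connected Stein neighborhood $\Delta^n(y_j,2)$ write $s=z_1^{\lfloor tp\rfloor}g$, expand $g=\sum_\alpha c_\alpha z^\alpha$; then $|g(x)|\le \sum|c_\alpha||x-y_j|^{|\alpha|}\cdot(\dots)$, and each $|c_\alpha|$ is bounded by $C^{|\alpha|}$ times $\big(\int_{\Delta^n(y_j,2)}|g|^2\,dm\big)^{1/2}$ — again integrating $|g|^2$. So I accept that one estimates $\int_{\Delta^n(y_j,3/2)}|g|^2\,dm$; since $g$ is holomorphic on $\Delta^n(y_j,2)$, the interior $L^2$ bound gives $\int_{\Delta^n(y_j,3/2)}|g|^2\,dm\le c'\int_{\Delta^n(y_j,2)}|g|^2\,dm$, and now crucially $\int_{\Delta^n(y_j,2)}|g|^2\,dm\le \int_{\Delta^n(y_j,2)}|g|^2 e^{-2p\varphi_j}e^{2p\|h\|_{\infty,W_j}}\,dm$; finally $|g|^2 e^{-2p\varphi_j}=|s|^2|z_1|^{-2\lfloor tp\rfloor}e^{-2p\varphi_j}$, and on $\Delta^n(y_j,2)$ we do NOT have $|z_1|$ bounded below, so instead I integrate only where $|z_1|\ge\delta$ and handle $|z_1|<\delta$ by a separate Cauchy-estimate argument in the $z_1$ variable: for fixed $(z_2,\dots,z_n)$, $g(z_1,\cdot)$ is holomorphic in $z_1$ on $|z_1|\le 2$, so its value for $|z_1|$ small is controlled by a circle integral at $|z_1|=1$, reducing everything to $|z_1|=1$ where $|z_1|^{-2\lfloor tp\rfloor}=1$.

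The main obstacle, then, is precisely organizing this one-variable Cauchy estimate in the $z_1$-direction so that the power $|z_1|^{2\lfloor tp\rfloor}$ is genuinely pulled out at the point $x$ while the remaining factor is controlled by $\|S\|_p^2$ without losing more than an absorbable constant to the $p$-th power; the weight factor contributes $e^{2p\|h\|_{\infty,W_j}}\le e^{2p\|h\|_\infty}$, and combining two such passes (from $W_j$ to the unit circle, and back) yields the stated $e^{4p\|h\|_\infty}$. After that, one optimizes: $|S(x)|_{h_p}^2 \le A^{2\lfloor tp\rfloor}|z_1(x)|^{2\lfloor tp\rfloor} e^{4p\|h\|_\infty}\|S\|_p^2\le (Ae^{\varrho(x)})^{2\lfloor tp\rfloor}e^{4p\|h\|_\infty}\|S\|_p^2$ after enlarging $A$ to also absorb the constant $A_j$ relating $|z_1|$ and $e^\varrho$, the sub-mean-value constant $c$, the interior $L^2$ constant $c'$, and the volume comparison between $dm$ and $\omega^n/n!$ on the compact $\overline W$. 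The bound on $P_{0,p}(x)$ follows immediately from the variational characterization \eqref{e:pPvar} by taking the supremum over unit-norm $S\in H^0_0(X,L^p)$.
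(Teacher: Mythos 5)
Your overall strategy matches the paper's: localize on $W_j$, write $s=z_1^{\lfloor tp\rfloor}\widetilde s$, compare $|z_1|$ with $e^{\varrho}$ on the compact polydisk, use sub-averaging, and absorb the weight via $e^{2p\|h\|_{\infty}}$ factors. But the step you yourself label ``the main obstacle'' --- dominating the \emph{unweighted} integral $\int|\widetilde s|^2\,dm$ by the weighted integral of $|s|^2=|z_1|^{2\lfloor tp\rfloor}|\widetilde s|^2$ without a loss that blows up with $p$ --- is precisely the heart of the proof, and you leave it unproven. You correctly diagnose that the naive routes fail (since $|z_1|$ is not bounded below near $\Sigma$), and you gesture at a Cauchy/maximum-principle reduction to the circle $|z_1|=1$, but you never organize it into an actual estimate, nor verify that the resulting constants are absorbable into $A^{2\lfloor tp\rfloor}e^{4p\|h\|_\infty}$. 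As it stands this is a genuine gap, not a complete proof.

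The paper closes exactly this gap with an elementary one-variable lemma (Lemma \ref{L:il}): if $f$ is holomorphic on the closed disk $\Delta(0,2)$ and $k\geq 0$, then $\int_{\Delta(0,2)}|f|^2\,dm\leq \frac{k+1}{2^{2k}}\int_{\Delta(0,2)}|\zeta|^{2k}|f|^2\,dm$, proved by expanding $f$ in a power series and comparing monomial norms. The crucial point is that the radius $2$ strictly exceeds the radius $1$ of the polydisk containing $x$: on the disk of radius $2$ the weight $|\zeta|^{2k}$ is large where it matters, so reinstating the factor $|z_1|^{2\lfloor tp\rfloor}$ inside the integral costs only the bounded factor $\frac{\lfloor tp\rfloor+1}{2^{2\lfloor tp\rfloor}}$ (in particular nothing exponential in $p$). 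After Fubini in the $z_1$-variable this turns $\int|\widetilde s|^2\,dm$ into a bound by $\int|s|^2 e^{-2p\varphi_j}\,\omega^n\leq\|S\|_p^2$ up to $e^{4p\|h\|_\infty}$ (insertion of $e^{\pm 2p\varphi_j}$) and a volume-comparison constant, which is then absorbed into $A$ together with the constant relating $|x_1|$ and $e^{\varrho(x)}$. If you want to salvage your circle-at-$|z_1|=1$ idea, you would essentially be reproving this lemma; until that (or an equivalent) is written out with the loss bookkeeping done, the claimed bound $|S(x)|^2_{h_p}\leq(Ae^{\varrho(x)})^{2\lfloor tp\rfloor}e^{4p\|h\|_\infty}\|S\|_p^2$ is not established. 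The final passage to $P_{0,p}$ via \eqref{e:pPvar} is fine and is the same as in the paper.
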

For the proof we need the following elementary lemma.

\begin{Lemma}\label{L:il}
If $k\geq0$ and $f\in\mathcal{O}(\Delta(0,2))$, where 
$\Delta(0,2)\subset\mathbb C$ is the closed disk centered at $0$ and of radius $2$, then
$$\int_{\Delta(0,2)}|f(\zeta)|^2\,dm(\zeta)
\leq\frac{k+1}{2^{2k}}\int_{\Delta(0,2)}|\zeta|^{2k}|f(\zeta)|^2\,dm(\zeta)\,.$$
\end{Lemma}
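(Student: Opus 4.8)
The plan is to prove the one-variable integral inequality
$$\int_{\Delta(0,2)}|f(\zeta)|^2\,dm(\zeta)\leq\frac{k+1}{2^{2k}}\int_{\Delta(0,2)}|\zeta|^{2k}|f(\zeta)|^2\,dm(\zeta)$$
by expanding $f$ in its Taylor series on the disk and computing both sides against the monomial basis, which is orthogonal on any centered disk with respect to $dm$.

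First I would write $f(\zeta)=\sum_{m\geq0}a_m\zeta^m$, convergent uniformly on compact subsets of the open disk of radius $2$; by monotone convergence it suffices to work with partial sums and pass to the limit, or equivalently to argue formally since all terms are nonnegative. Using polar coordinates $\zeta=re^{i\theta}$, the cross terms vanish upon integrating in $\theta$, so
$$\int_{\Delta(0,2)}|\zeta|^{2k}|f(\zeta)|^2\,dm(\zeta)=2\pi\sum_{m\geq0}|a_m|^2\int_0^2 r^{2k+2m}\cdot r\,dr=2\pi\sum_{m\geq0}|a_m|^2\,\frac{2^{2k+2m+2}}{2k+2m+2}\,,$$
and in particular the case $k=0$ gives $\int_{\Delta(0,2)}|f|^2\,dm=2\pi\sum_{m\geq0}|a_m|^2\,\frac{2^{2m+2}}{2m+2}$. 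So the claimed inequality reduces, term by term in $|a_m|^2$, to
$$\frac{2^{2m+2}}{2m+2}\leq\frac{k+1}{2^{2k}}\cdot\frac{2^{2k+2m+2}}{2k+2m+2}\,,$$
i.e. after cancelling $2^{2m+2}$, to $\frac{1}{m+1}\leq\frac{k+1}{k+m+1}$, equivalently $k+m+1\leq(k+1)(m+1)=km+k+m+1$, i.e. $0\leq km$, which holds since $k\geq0$ and $m\geq0$.

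The only point requiring a little care — and the closest thing to an obstacle, though it is routine — is justifying the interchange of summation and integration. Since the integrand $|\zeta|^{2k}|f(\zeta)|^2$ is a limit of the increasing sequence of partial-sum integrands only after expanding the square (which mixes signs), the clean way is: for $0<R<2$ apply the computation above on $\Delta(0,R)$, where the Taylor series converges uniformly so the termwise integration is valid and the inequality $\frac{1}{m+1}\leq\frac{k+1}{k+m+1}$ gives the bound on $\Delta(0,R)$ with $2$ replaced by $R$; then let $R\uparrow 2$ and use monotone convergence on each side (the integrands are nonnegative and increase to their values on $\Delta(0,2)$). This yields the stated inequality on the closed disk $\Delta(0,2)$, completing the proof.
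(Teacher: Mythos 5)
Your proposal is correct and follows essentially the same route as the paper: expand $f$ in its Taylor series, use orthogonality of monomials on the centered disk to compute both integrals termwise, and reduce the inequality to the elementary fact $km\geq 0$ (the paper phrases the weighted integral via $\zeta^k f$ and an index shift, which is the same computation). Your extra care about interchanging sum and integral is harmless but not really needed, since $f$ is holomorphic on (a neighborhood of) the closed disk, so the series converges uniformly there.
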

\begin{proof}
Consider the power expansion $f(\zeta)=\sum_{j=0}^\infty a_j\zeta^j$
of $f$ in $\Delta(0,2)$. Integrating in polar coordinates we obtain
\[
\int_{\Delta(0,2)}|f(\zeta)|^2\,dm(\zeta)=
2\pi\sum_{j=0}^\infty |a_j|^2\int_0^2r^{2j+1}\,dr=
2\pi\sum_{j=0}^\infty\frac{2^{2j+2}}{2j+2}\,|a_j|^2\,.
\] 
On the other hand, $\zeta^kf(\zeta)=\sum_{j=k}^\infty a_{j-k}\zeta^j$, so
\[
\begin{split}
\int_{\Delta(0,2)}|\zeta|^{2k}|f(\zeta)|^2\,dm(\zeta)
&=2\pi\sum_{j=k}^\infty\frac{2^{2j+2}}{2j+2}\,|a_{j-k}|^2
=2\pi\sum_{j=0}^\infty\frac{2^{2j+2+2k}}{2j+2+2k}\,|a_{j}|^2\\
&\geq \frac{2^{2k}}{k+1}\,2\pi\sum_{j=0}^\infty\frac{2^{2j+2}}{2j+2}\,|a_{j}|^2
=\frac{2^{2k}}{k+1}\,\int_{\Delta(0,2)}|f(\zeta)|^2\,dm(\zeta)\,.
\end{split}
\] 
\end{proof}

\begin{proof}[Proof of Theorem \ref{T:expd}]
Let $x\in W$. Fix $j\in\{1,\ldots,N\}$ such that $x\in \Delta^n(y_j,1)$ 
and let $e_j$ be the local frame of $L|_{W_j}$ and $\varphi_j$ be the 
corresponding weight of $h$ as considered in \eqref{e:h-infty}. 
Let $S\in H^0_{0}(X,L^p)$. On $W_j$ we write $S=se_j^{\otimes p}$, 
with $s\in\mathcal{O}(W_j)$. Then
we have $s(z)=z_1^{\lfloor tp\rfloor}\widetilde{s}(z)$, 
with $\widetilde{s}\in\mathcal{O}(W_j)$. Using the sub-averaging inequality we get
\begin{equation}\label{e:es1}
\begin{split}
|S(x)|_{h_p}^2&=|x_1|^{2\lfloor tp\rfloor}|\widetilde{s}(x)|^2 e^{-2p\varphi_j(x)}
\leq |x_1|^{2\lfloor tp\rfloor}e^{-2p\varphi_j(x)} \frac{1}{\pi^n}
\int_{\Delta^n(x,1)}|\widetilde{s}(z)|^2\,dm(z)\\
&\leq |x_1|^{2\lfloor tp\rfloor}e^{-2p\varphi_j(x)} 
\int_{\Delta^n(0,2)}|\widetilde{s}(z)|^2\,dm(z)\,.
\end{split}
\end{equation}
Applying Fubini's theorem for the splitting $z=(z_1,z')$ and Lemma \ref{L:il}
for the variable $z_1$, we obtain 
\begin{equation}\label{e:es2}
\begin{split}
\int_{\Delta^n(0,2)}|\widetilde{s}(z)|^2\,dm(z)&=
\int_{\Delta^{n-1}(0,2)}\int_{\Delta(0,2)}|\widetilde{s}(z_1,z')|^2\,dm(z_1)dm(z')\\
&\leq\frac{\lfloor tp\rfloor+1}{2^{2\lfloor tp\rfloor}}\int_{\Delta^n(0,2)}|z_1|^{2\lfloor tp\rfloor}
|\widetilde{s}(z)|^2\,dm(z)\\
&\leq C\,\exp\left(2p\max_{\Delta^n(0,2)}\varphi_j\right)
\int_{\Delta^n(0,2)}|s(z)|^2 e^{-2p\varphi_j(z)}\,\frac{\omega^n}{n!}\,,
\end{split}
\end{equation}
where $C=C(\mathcal W)\geq1$ is chosen such that 
$dm(z)\leq C\omega^n/n!$ on each $\Delta^n(y_j,2)$  
in the local coordinates of $W_j$, for $j=1,\ldots,N$. Combining \eqref{e:es1} and
\eqref{e:es2} we get
\begin{equation}\label{e:s1}
|S(x)|_{h_p}^2\leq C\,|x_1|^{2\lfloor tp\rfloor}
\exp\left(2p\max_{\Delta^n(0,2)}\varphi_j-2p\varphi_j(x)\right)\|S\|_p^2
\end{equation}
Note that there exists a constant $A'=A'(\rho,W)>1$ such that 
\begin{equation}\label{e:xrho}
|x_1|\leq A'e^{\rho(x)}\,,\:\:x\in W\,.
\end{equation}
Set $A=A'C$. The estimates \eqref{e:s1} and \eqref{e:xrho} yield
$$|S(x)|_{h_p}^2\leq (C|x_1|)^{2\lfloor tp\rfloor}e^{4p\|h\|_\infty}\|S\|_p^2
\leq(Ae^{\rho(x)})^{2\lfloor tp\rfloor}e^{4p\|h\|_\infty}\|S\|_p^2\,.$$
Taking into account \eqref{e:pPvar} we immediately obtain the conclusion. 
\end{proof}
\begin{Corollary}\label{C:expd}
In the setting of Theorem \ref{T:expd} we let
\begin{equation}\label{e:ut}
U_t:=\Big\{x\in W:\,(Ae^{\rho(x)})^{t}\,e^{4\|h\|_\infty}<1\Big\}\,.
\end{equation}
Then for any $x\in U_t$ and $p>2t^{-1}$ we have
\begin{equation}\label{e:estb1}
P_{0,p}(x)\leq \big[(Ae^{\rho(x)})^{t}\,e^{4\|h\|_\infty}\big]^p\,.
\end{equation} 
In particular $P_{0,p}=O(p^{-\infty})$ as $p\to\infty$ on $U_t$.
\end{Corollary}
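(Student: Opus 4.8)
The plan is to deduce Corollary~\ref{C:expd} directly from Theorem~\ref{T:expd}, using only that $\lfloor tp\rfloor \geq tp/2$ once $p$ is large enough. First I would record the elementary inequality $\lfloor tp\rfloor > tp - 1 \geq tp/2$, valid precisely when $tp > 2$, i.e.\ when $p > 2t^{-1}$; this is the only arithmetic input needed.

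Next I would start from the bound of Theorem~\ref{T:expd},
\begin{equation*}
P_{0,p}(x)\leq (Ae^{\rho(x)})^{2\lfloor tp\rfloor}e^{4p\|h\|_\infty}\,,
\end{equation*}
and split into two cases according to whether $Ae^{\rho(x)}\leq 1$ or $Ae^{\rho(x)}>1$. For $x\in U_t$, the factor $b(x):=(Ae^{\rho(x)})^t e^{4\|h\|_\infty}$ satisfies $0<b(x)<1$ by the definition \eqref{e:ut}. In the case $Ae^{\rho(x)}\leq 1$, the base $(Ae^{\rho(x)})^2\leq 1$, so raising it to the larger exponent $2\lfloor tp\rfloor$ rather than $tp$ only decreases it, giving $(Ae^{\rho(x)})^{2\lfloor tp\rfloor}\leq (Ae^{\rho(x)})^{tp}$; then $P_{0,p}(x)\leq (Ae^{\rho(x)})^{tp}e^{4p\|h\|_\infty}=b(x)^p$. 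In the case $1<Ae^{\rho(x)}$, the base exceeds $1$, and since $2\lfloor tp\rfloor\geq tp$ one must instead bound the exponent from above; but here one uses $\lfloor tp\rfloor\leq tp$, so $(Ae^{\rho(x)})^{2\lfloor tp\rfloor}\leq (Ae^{\rho(x)})^{2tp}$. That would produce $b(x)^{2p}e^{-4p\|h\|_\infty}$, which since $e^{-4p\|h\|_\infty}\leq 1$ and $b(x)<1$ is $\leq b(x)^p$. Either way $P_{0,p}(x)\leq b(x)^p=\big[(Ae^{\rho(x)})^{t}e^{4\|h\|_\infty}\big]^p$, which is \eqref{e:estb1}. (A cleaner uniform route: write $2\lfloor tp\rfloor = tp + (2\lfloor tp\rfloor - tp)$ and note $|2\lfloor tp\rfloor - tp|\leq tp$; then $(Ae^{\rho(x)})^{2\lfloor tp\rfloor}e^{4p\|h\|_\infty}\leq (Ae^{\rho(x)})^{tp}e^{4p\|h\|_\infty}\cdot\max\{(Ae^{\rho(x)}),(Ae^{\rho(x)})^{-1}\}^{tp}$, and this last factor is awkward — so the two-case argument is the honest one.)

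Finally, for the $O(p^{-\infty})$ assertion: fix a compact $K\subset U_t$. By continuity of $\rho$ and compactness, $\sup_{x\in K}b(x)=:b_K<1$. Hence $P_{0,p}\leq b_K^p$ on $K$ for $p>2t^{-1}$, and since $b_K^p p^\ell\to 0$ for every $\ell$, we have $P_{0,p}=O(p^{-\infty})$ on $U_t$ in the sense defined in Section~\ref{s:prelim}. I expect no real obstacle here; the only point requiring a moment's care is handling the sign of $\log(Ae^{\rho(x)})$ when passing from the exponent $2\lfloor tp\rfloor$ to $tp$, which is why the proof splits into the two cases above.
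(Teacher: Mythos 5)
Your proof is correct and takes essentially the same route as the paper: apply Theorem \ref{T:expd} and use $2\lfloor tp\rfloor>2tp-2>tp$ for $p>2/t$ to pass from the exponent $2\lfloor tp\rfloor$ to $tp$. The only difference is cosmetic: since $\|h\|_\infty\geq 1$ by \eqref{e:h-infty}, membership in $U_t$ already forces $Ae^{\rho(x)}<1$, so your second case $Ae^{\rho(x)}>1$ is vacuous and the paper dispenses with the case split.
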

\begin{proof} This follows immediately from Theorem \ref{T:expd}, 
since $Ae^{\rho(x)}<1$ for $x\in U_t$, and $2\lfloor tp\rfloor>2tp-2>tp$ for $p>2/t$.
\end{proof}

We conclude this section by giving a version of Theorem \ref{T:expd} 
in the case when $X$ is not compact. Let $(X,\omega)$ be a Hermitian 
manifold of dimension $n$, $\Sigma$ be a smooth analytic hypersurface 
of $X$, $t>0$ a fixed real number, and $(L,h)$ a singular Hermitian holomorphic 
line bundle on $X$ with singular metric $h$ which has locally bounded weights. 

As in the case of a compact manifold $X$, we introduce
the Bergman space $H^0_{(2)}(X,L^p)=H^0_{(2)}(X,L^p,h_p,\omega^n/n!)$ 
of $L^2$-holomorphic sections of 
$L^p$ relative to the metric $h_{p}$ 
induced by $h$ and the volume form $\omega^n/n!$ on $X$, endowed with the inner product
\eqref{e:ip}.
Let $H^0_{(2),0}(X,L^p)\subset H^0_{(2)}(X,L^p)$ be the Bergman space of $L^2$-holomorphic sections of  
$L^p$ vanishing to order at least $\lfloor tp\rfloor$ along $\Sigma$, 
\begin{equation*}\label{e:s1.12}
H^0_{(2),0}(X,L^p):=
H^0_{(2)}\big(X, L^p\otimes\mathcal{O}\big(-\lfloor tp\rfloor\Sigma\big)\big).
\end{equation*}
The spaces $H^0_{(2)}(X,L^p)$ and $H^0_{(2),0}(X,L^p)$ are not necessarily finite dimensional 
but their Bergman kernel functions $P_{p}$ and $P_{0,p}$ 
can be defined as in \eqref{e:Bk}, \eqref{e:pBk}, by means of at most countable 
orthonormal bases, see \cite[Lemma 3.1]{CM11}.
 
We fix a compact set $E\subset X$ and consider an open cover 
$\mathcal W=\{W_j\}_{1\leq j\leq N}$ and a neighborhood $W$ of the compact set 
$\Sigma\cap E$ constructed as in \eqref{e:Wgeom}. 
Finally define $\|h\|_\infty=\|h\|_{\infty,\mathcal W}$ as in \eqref{e:h-infty}, 
and the function $\varrho$ as in \eqref{e:rho}, such that $\varrho<0$ 
in a neighborhood of $E$. 
The following theorem is proved exactly as Theorem \ref{T:expd}:


\begin{Theorem}\label{T:expdnc}
Let $(X,\omega)$ be a Hermitian 
manifold of dimension $n$, $\Sigma$ be a smooth analytic hypersurface 
of $X$, $t>0$ a fixed real number, and $(L,h)$ a singular Hermitian holomorphic 
line bundle on $X$ with singular metric $h$ which has locally bounded weights. 
Then for any compact set $E\subset X$ there exists a neighborhood $W$ of the compact set 
$\Sigma\cap E$
and a constant $A\geq1$ such that for every $x\in W$ and $p\geq1$, 
$$P_{0,p}(x)\leq (Ae^{\rho(x)})^{2\lfloor tp\rfloor}e^{4p\|h\|_\infty}\,.$$
The constant $A$ depends only on 
$\rho$ and $\mathcal W$ above.
\end{Theorem}

\section{Singular Bergman kernel}
In this section we prove Theorem \ref{T:mt2} by using ideas of Berndtsson, 
who gave in \cite[Section\,2]{B03} a simple proof for the first order asymptotics 
of the Bergman kernel function in the case of powers of an ample line bundle 
(see also \cite[Theorem 1.3]{CMM14}).

We start by recalling the following version of Demailly's estimates for the 
$\db$ operator \cite[Th\'eor\`eme 5.1]{De:82} (see also \cite[Theorem\,2.5]{CMM14}) 
which will be needed in our proofs.

\begin{Theorem}\label{T:db} Let $(X,\omega)$
    be a compact K\"ahler manifold of dimension $n$,
    and let $B>0$ be a constant such that $\ric_\omega\geq-2\pi B\omega$
    on $X$. Let $(L,h)$ be a singular Hermitian holomorphic line 
    bundle on $X$ such that $c_1(L,h)\geq\varepsilon\omega$, and 
    fix $p_0$ such that $p_0\varepsilon\geq 2B$. Then for all $p>p_0$ and all
    $g\in L^2_{0,1}(X,L^p,loc)$ with $\db g=0$ and 
    $\int_X|g|^2_{h_p}\,\omega^n<\infty$ there exists 
    $u\in L^2_{0,0}(X,L^p,loc)$ such that $\db u=g$ and 
    $\int_X|u|^2_{h_p}\,\omega^n\leq\frac{2}{p\varepsilon}\,
    \int_X|g|^2_{h_p}\,\omega^n$. 
\end{Theorem}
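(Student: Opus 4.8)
The statement is Demailly's $L^2$-estimate for $\db$, and the plan is to reprove it by combining the Bochner--Kodaira--Nakano inequality with H\"ormander's functional-analytic solution scheme. Throughout, write $\|\cdot\|$ for the $L^2$-norm with respect to $h_p$ and $\omega^n$ on the relevant space of $L^p$-valued forms, so that $g$ lies in the Hilbert space $L^2_{0,1}(X,L^p)$ with this norm, where $\db$ denotes the maximal closed extension and $\db^*$ its Hilbert-space adjoint. The key device is to regard an $L^p$-valued $(0,1)$-form as an $(n,1)$-form with values in $L^p\otimes K_X^{-1}$, using the volume form attached to $\omega$; this identification is an isometry, and for $(n,1)$-forms the Nakano curvature operator $[\imat\Theta(\cdot),\Lambda_\omega]$ is bounded below by the smallest eigenvalue of the curvature form relative to $\omega$, with no extra Ricci correction term.

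The first step is the a priori estimate. For a smooth $L^p\otimes K_X^{-1}$-valued $(n,1)$-form $u$ with compact support in the open set where the weight of $h$ is smooth, the Bochner--Kodaira--Nakano inequality on the compact K\"ahler manifold $(X,\omega)$ gives
\[
\|\db u\|^2+\|\db^*u\|^2\ \geq\ \int_X\big\langle[\imat\Theta(L^p\otimes K_X^{-1}),\Lambda_\omega]\,u,\,u\big\rangle_{h_p}\,\omega^n\,.
\]
Now $\imat\Theta(L^p\otimes K_X^{-1})=p\,\imat\Theta(L,h)+\ric_\omega$ as currents, with $\imat\Theta(L,h)=2\pi c_1(L,h)\geq 2\pi\varepsilon\omega$ and $\ric_\omega\geq-2\pi B\omega$; since $p>p_0$ and $p_0\varepsilon\geq 2B$ we have $p\varepsilon\geq 2B$, hence
\[
\imat\Theta(L^p\otimes K_X^{-1})\ \geq\ 2\pi(p\varepsilon-B)\,\omega\ \geq\ \pi p\varepsilon\,\omega\,.
\]
For an $(n,1)$-form this yields $\big\langle[\imat\Theta,\Lambda_\omega]u,u\big\rangle_{h_p}\geq\pi p\varepsilon\,|u|^2_{h_p}$ pointwise, and therefore $\|\db u\|^2+\|\db^*u\|^2\geq\pi p\varepsilon\,\|u\|^2$.

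Because $h$ is merely singular, the second step extends this inequality from smooth, compactly supported test forms to all of $\Dom\db\cap\Dom\db^*$ with finite $h_p$-weighted norm. Here I would use Demailly's density argument: the unbounded locus of the weight of $h$ is contained in an analytic set, its complement carries a complete K\"ahler metric, and regularizing $h$ along an exhaustion by relatively compact such open sets (keeping $c_1\geq(\varepsilon-\delta)\omega$) one passes to the limit and recovers the inequality on the full domain; this is exactly \cite[Th\'eor\`eme 5.1]{De:82}. The third step is H\"ormander's solution: given $g\in\ker\db$, decompose any $v\in\Dom\db^*$ as $v=v_1+v_2$ with $v_1$ the orthogonal projection of $v$ onto the closed subspace $\ker\db$; then $\db^*v_1=\db^*v$, and since $g\in\ker\db$ and $v_2\perp\ker\db$,
\[
|(v,g)|^2=|(v_1,g)|^2\leq\|g\|^2\|v_1\|^2\leq\frac{1}{\pi p\varepsilon}\,\|g\|^2\,\|\db^*v_1\|^2=\frac{1}{\pi p\varepsilon}\,\|g\|^2\,\|\db^*v\|^2\,,
\]
using $\db v_1=0$. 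Hence $\db^*v\mapsto(v,g)$ is a well-defined bounded functional on the range of $\db^*$ of norm at most $(\pi p\varepsilon)^{-1/2}\|g\|$, and Hahn--Banach together with the Riesz representation theorem produce $u\in L^2_{0,0}(X,L^p,loc)$ with $\db u=g$ in the weak sense and
\[
\int_X|u|^2_{h_p}\,\omega^n=\|u\|^2\leq\frac{1}{\pi p\varepsilon}\,\|g\|^2\leq\frac{2}{p\varepsilon}\int_X|g|^2_{h_p}\,\omega^n\,,
\]
which is the assertion (the constant $2/(p\varepsilon)$ being comfortably non-sharp).

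The step I expect to be the real obstacle is the second one: the Bochner--Kodaira--Nakano identity is a smooth statement, and making it rigorous for a genuinely singular metric $h$, whose local weights need not be locally bounded, requires the regularization-and-density machinery on the complete K\"ahler complement of the polar set. Everything else is bookkeeping with curvature signs plus the standard Hilbert-space argument; it is precisely the singular case that makes it worthwhile to invoke Demailly's theorem rather than the classical Akizuki--Nakano--H\"ormander version.
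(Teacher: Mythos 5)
The paper does not actually prove Theorem \ref{T:db}: it is quoted as a known result, namely Demailly's $L^2$-estimate \cite[Th\'eor\`eme 5.1]{De:82} (see also \cite[Theorem 2.5]{CMM14}). Your outline is the standard proof of that estimate, and the bookkeeping is correct: identifying $L^p$-valued $(0,1)$-forms with $(n,1)$-forms valued in $L^p\otimes K_X^{-1}$, the curvature bound $2\pi(p\varepsilon-B)\geq\pi p\varepsilon$ (valid since $p\varepsilon\geq 2B$) together with the Bochner--Kodaira--Nakano inequality and the H\"ormander duality argument yield $\|u\|^2\leq(\pi p\varepsilon)^{-1}\|g\|^2\leq\frac{2}{p\varepsilon}\|g\|^2$, which is even slightly stronger than the stated constant. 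The one caveat is that your second step --- extending the a priori inequality to the singular metric via regularization and the complete-K\"ahler-metric-on-the-complement argument --- is justified by citing precisely the theorem being proved, so as a standalone proof it is circular at that point; but since the paper itself treats the statement purely by citation, you and the paper are in effect resting on the same external result.
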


\begin{proof}[Proof of Theorem \ref{T:mt2}]
Let $\mathcal W=\{W_j\}_{1\leq j\leq N}$ be an open cover of $\Sigma$ 
verifying \eqref{e:Wgeom} and \eqref{e:WgeomK}. If \(j\in\{1,\ldots,N\}\) 
and \(x\in\Delta^n(y_j,1)\), let \(z=z(x)\) be the coordinates on \(\Delta^n(y_j,2)\) 
given by \eqref{e:WgeomK}, and let \(e_{j,x}\) be a holomorphic frame of \(L\) on \(W_j\) 
such that \(|e_{j,x}|_{\widetilde{h}}=e^{-\varphi_x}\), where $\varphi_x$ 
is given by \eqref{e:phix}.

Assume now that \(x\in\Delta^n(y_j,1)\setminus\Sigma \) and define 
$$r_x:=\sup\Big\{r\in(0,r_1]:\Delta_z^n(x,r)\subset\Delta^n(y_j,2)\setminus\Sigma\Big\}\,.$$
We have 
\begin{equation}\label{e:omc}
\begin{split}
\omega_x&=\frac{i}{2}\sum_{\ell=1}^n dz_\ell\wedge d\bar{z}_\ell\,,\\
c_1(L,\widetilde{h})_x&=dd^c \varphi_x(0)=dd^c \psi_x(0)
=dd^c \psi^\prime_x(0)=\frac{i}{\pi}
\sum_{\ell=1}^n\lambda_\ell\, dz_\ell\wedge d\bar{z}_\ell\,.
\end{split}
\end{equation}
Since $c_1(L,\widetilde{h})_x\geqslant\varepsilon\omega_x$ it follows that 
$\lambda_\ell\geq\varepsilon$, $\ell=1,\ldots,n$. Moreover, there exists 
$H_x\in\mathcal{O}(\Delta_z^n(x,r_x))$ such that 
$\operatorname{Re} H_x=\operatorname{Re} F_x+t\log|f|$. 
We define a new frame for $L$ over $\Delta_z^n(x,r_x)$ by $e_x=e^{H_x}e_{j,x}$.
Hence
$$|e_x|_{\widetilde{h}}=\exp(\operatorname{Re}H_x)\exp(-\varphi_x)
=\exp(-\psi_x^\prime-\widetilde\psi_x)\,.$$

\par We fix now $j\in\{1,\ldots,N\}$ and $x\in\Delta^n(y_j,1)\setminus\Sigma$ 
and we will estimate $\widetilde{P}_p(x)$.  Let $r_p\in(0,r_x/2)$
be an arbitrary number which will be specified later. 
We start by estimating the norm of a section 
$S \in H^0_{(2)}(X,L^p,\widetilde{h}_p,\omega^n/n!)$ at $x$. 
Writing $S = se_x^{\otimes p}$, 
where $s \in \mathcal{O}(\Delta_z^n(x,r_x))$, we obtain by the 
sub-averaging inequality for psh functions on $\Delta_z^n(x,r_p)=\Delta^n(0,r_p)$,
\[
|S(x)|^2_{\widetilde{h}_p} = |s(0)|^2 \leq \frac{\int_{\Delta^n(0,r_p)} 
|s|^2 e^{-2p \psi^\prime_x}\,dm}{\int_{\Delta^n(0,r_p)} 
e^{-2p \psi^\prime_x} \,dm}\,\cdot
\]
We have further by \eqref{e:dm}, \eqref{e:psix},
\begin{eqnarray*}
\int_{\Delta^n(0,r_p)}|s|^2e^{-2p \psi^\prime_x}\,dm&
\leq&(1+C_1r^2_p)\exp\!\big(2p\sup_{\Delta^n(0,r_p)}
\widetilde{\psi}_x\big)\int_{\Delta^n(0,r_p)}|s|^2
e^{-2p(\psi_x^\prime+ \widetilde\psi_x)}\frac{\omega^n}{n!}\\ 
&\leq&(1+C_1r^2_p)\exp\!\big(2C_2p\,r^3_p\big)
\|S\|^2_p\,.
\end{eqnarray*}
Set 
$$E(r):=\int_{|\xi|\leq r}e^{-2|\xi|^2}\,dm(\xi)
=\frac{\pi}{2}\,\left(1-e^{-2r^2}\right).$$ 
Since 
$\lambda_\ell\geq\varepsilon$ we obtain 
$$\frac{E(r_p\sqrt{p\varepsilon}\,)^n}{p^n\lambda_1\ldots\lambda_n} 
\leq \int_{\Delta^n(0,r_p)}e^{-2p\psi^\prime_x}\,dm
\leq\int_{\C^{n}}e^{-2p\psi^\prime_x}\,dm 
= \frac{(\pi/2)^n}{p^n\lambda_1\ldots\lambda_n}\,\cdot$$
Combining these estimates it follows that 
\begin{equation}\label{e:bk1}
|S(x)|_{\widetilde{h}_p}^2\leq\frac{(1+C_1r_p^2)\exp\!\big(2C_2p\,r_p^3
\big)}{E(r_p\sqrt{p\varepsilon})^n}\,p^n\lambda_1
\ldots\lambda_n\,\|S\|_p^2\,.
\end{equation}
The singular Bergman kernel also satisfies a variational formula,
\[\widetilde{P}_p(x)=\max\Big\{|S(x)|^2_{\widetilde{h}_p}:
\,S\in H^0_{(2)}(X,L^p,\widetilde{h}_p,\omega^n/n!),\;\|S\|_p=1\Big\}.
\]
Hence \eqref{e:bk1} implies the following upper estimate for the
singular Bergman kernel,
\begin{equation}\label{e:bk2}
\frac{\widetilde{P}_p(x)}{p^n\lambda_1\ldots\lambda_n}\leq 
\frac{(1+C_1r_p^2)\exp\!\big(2C_2p\,r_p^3
\big)}{E(r_p\sqrt{p\varepsilon})^n}\,,\quad \forall\, r_p\in(0,r_x/2). 
\end{equation}
\par For the lower estimate of $\widetilde{P}_p$, let $0\leq\chi\leq1$ 
be a smooth cut-off function on $\C^{n}$ with support in $\Delta^n(0,2)$ such that 
$\chi\equiv 1$ on $\Delta^n(0,1)$, and set
$\chi_p(z)=\chi(z/r_p)$. Then $F=\chi_p e_x^{\otimes p}$ is a section of 
$L^p$ and $|F(x)|_{\widetilde{h}_p}=|e_x^{\otimes p}(x)|_{\widetilde{h}_p}=1$. 
We have  
\begin{equation}\label{e:bk3}\begin{split}
\|F\|^2_p&\leq\int_{\Delta^n(0,2r_p)}
e^{-2p(\psi_x^\prime+ \widetilde\psi_x)}\,\frac{\omega^n}{n!}\\
&\leq (1+4C_1r^2_p)\exp\!\big(16C_2p\,r_p^3\big)
\int_{\Delta^n(0,2r_p)}e^{-2p\psi^\prime_x}\,dm\\
&\leq\left(\frac{\pi}{2}\right)^n\,\frac{(1+4C_1r_p^2)
\exp\!\big(16C_2p\,r_p^3\big)}{p^n\lambda_1
\ldots\lambda_n}\,\cdot
\end{split}
\end{equation}
Set $\alpha=\overline\partial F$. Since 
$\|\overline\partial\chi_p\|^2=\|\overline\partial\chi\|^2/r_p^2$, 
where $\|\overline\partial\chi\|$ denotes the maximum of 
$|\overline\partial\chi|$, we obtain as above 
\[
\|\alpha\|_p^2=\int_{\Delta^n(0,2r_p)}|\overline\partial\chi_p|^2
e^{-2p(\psi_x^\prime+ \widetilde\psi_x)}\,\frac{\omega^n}{n!}
\leq\frac{\|\overline\partial\chi\|^2}{r_p^2}\,\left(\frac{\pi}{2}\right)^n\,
\frac{(1+4C_1r_p^2)\exp\!\big(16C_2p\,r_p^3\big)}
{p^n\lambda_1\ldots\lambda_n}\,\cdot
\]
There exists $p_0\in\N$ such that for 
$p>p_0$ we can solve the $\overline\partial$--equation by 
Theorem \ref{T:db}. We get a smooth section $G$ of $L^p$ 
with $\overline\partial G=\alpha=\overline\partial F$ and 
\begin{equation}\label{e:bk4}
\|G\|_p^2\leq \frac{2}{p\varepsilon}\|\alpha\|_p^2
\leq \frac{2\|\overline\partial\chi\|^2}{p\varepsilon r_p^2}\,
\left(\frac{\pi}{2}\right)^n\,\frac{(1+4C_1r_p^2)
\exp\!\big(16C_2p\,r_p^3\big)}{p^n\lambda_1\ldots
\lambda_n}\,\cdot 
\end{equation}
Note that $G$ is holomorphic on $\Delta^n(0,r_p)$ since 
$\overline\partial G=\overline\partial F=0$ there. 
So the estimate \eqref{e:bk1} applies to $G$ on $\Delta^n(0,r_p)$ and gives  
\begin{eqnarray*}
|G(x)|_{\wi{h}_p}^2&\leq&\frac{(1+C_1r_p^2)
\exp\!\big(2C_2p\,r_p^3\big)}
{E(r_p\sqrt{p\varepsilon})^n}\,p^n\lambda_1\ldots\lambda_n\|G\|_p^2\\
&\leq&\frac{2\|\overline\partial\chi\|^2}{p\varepsilon  r_p^2
E(r_p\sqrt{p\varepsilon})^n}\,\left(\frac{\pi}{2}\right)^n\,(1+4C_1r_p^2)^2 
\exp\!\big(18C_2p\,r_p^3\big).
\end{eqnarray*}
Let $S=F-G\in H^0_{(2)}(X,L^p,\wi{h}_p,\omega^n/n!)$. Then
\begin{eqnarray*}
|S(x)|_{\wi{h}_p}^2&\geq&(|F(x)|_{\wi{h}_p}-|G(x)|_{\wi{h}_p})^2
= (1-|G(x)|_{\wi{h}_p})^2\\
&\geq&\left[1-\,\left(\frac{\pi}{2}\right)^{n/2}
\frac{\sqrt{2}\,\|\overline\partial\chi\|(1+4C_1r_p^2)}{r_p\sqrt{p\varepsilon}\,
E(r_p\sqrt{p\varepsilon})^{n/2}}\,\exp\!\big(9C_2p\,r_p^3\big)
\right]^2=:K_1(r_p)\,.
\end{eqnarray*}
Moreover, by \eqref{e:bk3} and \eqref{e:bk4}
$$\|S\|_p^2\leq (\|F\|_p+\|G\|_p)^2
\leq \left(\frac{\pi}{2}\right)^n \,\frac{K_2(r_p)}{p^n\lambda_1
\ldots\lambda_n} \,,$$
where 
$$K_2(r_p)=(1+4C_1r_p^2) \exp\!\big(16C_2p\,r_p^3\big)
\left(1+\frac{\sqrt{2}\,\|\overline\partial\chi\|}{r_p\sqrt{p\varepsilon}}\right)^2
\cdot$$
Therefore 
\begin{equation}
\label{e:bk5}
\wi{P}_p(x)\geq \frac{|S(x)|_{\wi{h}_p}^2}{\|S\|_p^2}
\geq \left(\frac{2}{\pi}\right)^np^n\lambda_1\ldots\lambda_n\,
\frac{K_1(r_p)}{K_2(r_p)}\,\cdot
\end{equation}
Using now \eqref{e:omc}, \eqref{e:bk2} and \eqref{e:bk5} we deduce that for every 
$x\in\bigcup_{j=1}^N\Delta^n(y_j,1)\setminus\Sigma\,$, $r_p<r_x/2$ and $p>p_0$,
\begin{equation}
\label{e:bk6}
\frac{K_1(r_p)}{K_2(r_p)}\leq \wi{P}_p(x)\,\frac{\omega^n_x}{p^nc_1(L,\wi{h})_x^n}
\leq K_3(r_p)\,,\end{equation}
where 
$$K_3(r_p)=\left(\frac{\pi/2}{E(r_p\sqrt{p\varepsilon})}\right)^n
(1+C_1r_p^2)\exp\!\big(2C_2p\,r_p^3\big)\,.$$
We take now $r_p=p^{-3/8}$, so $p\,r_p^3=p^{-1/8}\to0$ and 
$p\,r_p^2=p^{1/4}\to\infty$ as $p\to\infty$. Note that there exists a constant $C_3>0$ such that 
$$K_1(p^{-3/8})\geq 1-C_3p^{-1/8}\,,\,\;K_2(p^{-3/8})
\leq 1+C_3p^{-1/8}\,,\,\;K_3(p^{-3/8})\leq 1+C_3p^{-1/8}\,.$$
It follows by \eqref{e:bk6} that there exists a constant $C_4>0$ such that 
\begin{equation}
\label{e:bk7}
1-C_4\,p^{-1/8}\leq \wi{P}_p(x)\,\frac{\omega^n_x}{p^nc_1(L,\wi{h})_x^n}
\leq 1+C_4\,p^{-1/8}
\end{equation}
holds for every $x\in\bigcup_{j=1}^N\Delta^n(y_j,1)\setminus\Sigma\,$, 
$p^{-3/8}<r_x/2$ and $p>p_0$.
Now $r_x>c\dist(x,\Sigma)$, for some constant $c>0$, so there exists a constant 
$C_5>0$ such that \eqref{e:bk7}
holds for $p>C_5\dist(x,\Sigma)^{-8/3}$. This concludes the proof of 
\eqref{e:Bexp00} for $x\in\bigcup_{j=1}^N\Delta^n(y_j,1)\setminus\Sigma$.

By \cite[Theorem\,1.8]{HsM:14} there exist $C_6>0$ and $p^{\prime}_0\in\N$ such that
$$\left|\wi{P}_p(x)\,\frac{\omega^n_x}{p^nc_1(L,\wi{h})_x^n}-1\right|\leq\frac{C_6}{p}\,,$$
for $x\in X\setminus\bigcup_{j=1}^N\Delta^n(y_j,1)$ and $p>p^{\prime}_0$. 
The proof of Theorem \ref{T:mt2} is complete.
\end{proof}

\section{Estimates for the partial Bergman kernel}
In this section we prove Theorem \ref{T:mt3}. Let $t<t_0(K)$. 
By the definition \eqref{e:t0} of $t_0(K)$,
there exist $\eta\in\cC^\infty(X,[0,1])$ and $\delta>0$ such that 
$\supp\eta\subset X\setminus K$, $\eta=1$ near $\Sigma$ and 
$c_1(L,h)+tdd^c(\eta\varrho)\geq\delta\omega$ in the sense of currents on $X$. Define
\[\wi{h}_t=h\exp(-2t\eta\varrho)\,,\:\:\wi{h}_{t,p}=\wi{h}_t^{\otimes p}\,.\]
Note that $\wi{h}_t=h$ in a neighborhood of $K$ and $\wi{h}_t\geq h$ on $X$.
Since $\Sigma$ is smooth, it follows by \eqref{e:rho} that 
$H^0_0(X,L^p)=H^0_{(2)}(X,L^p,\wi{h}_{t,p},\omega^n/n!)$.
We denote the norm on $H^0_{(2)}(X,L^p,\wi{h}_{t,p},\omega^n/n!)$ by
\[\|S\|_{t,p}^2=\int_X|S|_{\wi{h}_{t,p}}^2\frac{\omega^n}{n!}
=\int_X|S|_{h_p}^2\exp(-2tp\eta\varrho)\,\frac{\omega^n}{n!}\,\cdot\]
Let $\wi{P}_{t,p}$ be the Bergman kernel function of 
$H^0_{(2)}(X,L^p,\wi{h}_{t,p},\omega^n/n!)$.
Recall that $\|S\|_{p}$ is the norm given by the scalar product \eqref{e:ip}
on $H^0_0(X,L^p)$. Since $\varrho<0$ we have $\|S\|_{t,p}^2\geq \|S\|_{p}^2$
for any $S\in H^0_0(X,L^p)$.
Let $S\in H^0_0(X,L^p)$ with $\|S\|_{t,p}^2\leq1$. Then $\|S\|_{p}^2\leq1$, too, hence
\[|S|_{\wi{h}_{t,p}}^2=|S|_{h_p}^2\exp(-2tp\eta\varrho)
\leq P_{0,p}\exp(-2tp\eta\varrho)\,,\]
and thus
\begin{equation*}
\wi{P}_{t,p}\leq P_{0,p}\exp(-2tp\eta\varrho)\,.
\end{equation*}
Denote now by $P_p$ the Bergman kernel function of $H^0(X,L^p)$ endowed with the
scalar product \eqref{e:ip}. Since $H^0_0(X,L^p)$ is isometrically embedded
in $H^0(X,L^p)$ we have $P_{0,p}\leq P_p$. Consequently we have shown:
\begin{equation}\label{e:bv}
\begin{split}
\wi{P}_{t,p}\exp(2tp\eta\varrho)&\leq P_{0,p}\leq P_p\:\:\text{on $X$},\\
\wi{P}_{t,p}\leq P_{0,p}&\leq P_p\:\:\text{near $K$}.
\end{split}
\end{equation}
Let now $W$ be the neighborhood of $\Sigma$ defined in \eqref{e:Wgeom} and let $U_t$
be defined as in \eqref{e:ut}, so that the exponential estimate \eqref{e:estb1} 
holds on $U_t$ for $p>2t^{-1}$. By shrinking $U_t$ we can assume that $\eta=1$
on $U_t$. Setting $M:=e^{4\|h\|_\infty}A^t$ we obtain\eqref{e:exp1}.
By Theorem \ref{T:mt2} we have
\[\wi{P}_{t,p}(x)\geq(1-Cp^{-1/8})p^n\,\frac{c_1(L,\wi{h}_t)^n_x}{\omega^n_x}\] 
for every $p\in\N$ with $p\dist(x,\Sigma)^{8/3}>C$.
Note that $c_1(L,\wi{h}_t)\geq\delta\omega$ in the sense of currents on $X$. 
Since $c_1(L,\wi{h}_t)$ is smooth on $X\setminus\Sigma$ we have
$\dfrac{c_1(L,\wi{h}_t)^n}{\omega^n}\geq\delta^n$ on $X\setminus\Sigma$. 
By increasing $C$ if necessary,
it follows that
\[\wi{P}_{t,p}(x)\geq \frac{p^n}{C}\:\:\:\:\text{for $p>C\dist(x,\Sigma)^{-8/3}$}.\]
Hence 
\[P_{0,p}(x)\geq \frac{p^n}{C}\exp(2tp\varrho(x))\:\:\:\:
\text{for $x\in U_t$ and $p>C\dist(x,\Sigma)^{-8/3}$}.\]
This proves \eqref{e:exp2}.
In order to prove \eqref{e:exp3o} we need the following localization theorem
for the Bergman kernel for singular Hermitian metrics. 
We refer to \cite[Theorem 2.2]{AMM16} for a localization principle in the case of 
smooth metrics.
\begin{Theorem}\label{T:bkloc}
Let $(X,\omega)$ be a compact Hermitian manifold and $L\to X$ be a holomorphic
line bundle. Consider two singular Hermitian metrics $h_1$ and $h_2$ on $L$, which
are smooth outside a proper analytic set $\Sigma\subset X$ and such that 
$c_1(L,h_1)$, $c_1(L,h_2)$ are K\"ahler currents. Let $P_p^{(j)}$ 
be the Bergman projection on $H^0(X,L^p,h^p_j,\omega^n/n!)$, $j=1,2$.
We assume that there exists an open set $U\Subset X\setminus\Sigma$ 
such that $h_1=h_2$ on $U$. 
Then the Bergman kernels satisfy $P_p^{(1)}(z,w)-P_p^{(2)}(z,w)=O(p^{-\infty})$ on $U$ 
in any $\cC^\ell$-topology, $\ell\in\N$, as $p\to\infty$. 
\end{Theorem}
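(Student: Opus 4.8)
The plan is to deduce this from the off-diagonal asymptotic expansion of singular Bergman kernels established in Hsiao–Marinescu, combined with a standard localization argument for the $\overline\partial$-equation. First I would fix a slightly smaller open set $U'$ with $U'\Subset U$ and a point $z_0\in U'$. Since $h_1=h_2=:h$ on $U$, the two Bergman kernels $P_p^{(1)}(\cdot,\cdot)$ and $P_p^{(2)}(\cdot,\cdot)$ are, near $U'$, both sections of $L^p\boxtimes\overline{L^p}$ built from an $L^2$-holomorphic space with respect to the \emph{same} local weight; the only difference is the global $L^2$ structure on $X$. The key analytic input is that each $P_p^{(j)}$ has a full asymptotic expansion on $U$ in any $\cC^\ell$-norm, by \cite[Theorem\,1.8]{HsM:14}, since $c_1(L,h_j)$ is a K\"ahler current and $U\Subset X\setminus\Sigma$ lies in the regular locus. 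The idea is that the leading terms of both expansions are computed from the \emph{same} local data (the metric $h$ on $U$ and $\omega$ on $U$) via Hsiao–Marinescu's universal formulas, hence they agree to all orders, so the difference is $O(p^{-\infty})$.

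More precisely, I would argue as follows. Let $e$ be a local holomorphic frame of $L$ over a coordinate ball $V\subset U$, with weight $\varphi$ (so $|e|_h=e^{-\varphi}$), and write $P_p^{(j)}(z,w)=B_p^{(j)}(z,w)\,e(z)^{\otimes p}\otimes\overline{e(w)^{\otimes p}}$ for scalar functions $B_p^{(j)}$. The near-diagonal expansion of \cite{HsM:14} (their localized model kernel) gives $e^{-p\varphi(z)}e^{-p\varphi(w)}B_p^{(j)}(z,w)=p^n\sum_{r\geq 0}b_r^{(j)}(z,w)p^{-r}+O(p^{-\infty})$ uniformly with all derivatives on compacts of $V$, where the coefficients $b_r^{(j)}$ are given by universal polynomial expressions in the derivatives of $\varphi$ and of the Hermitian form of $\omega$ at the point, evaluated \emph{locally}. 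Since $\varphi$ and $\omega$ restricted to $V$ are the same for $j=1$ and $j=2$, we get $b_r^{(1)}\equiv b_r^{(2)}$ for all $r$, whence $B_p^{(1)}(z,w)-B_p^{(2)}(z,w)=O(p^{-\infty})$ on compacts of $V$, i.e. $P_p^{(1)}(z,w)-P_p^{(2)}(z,w)=O(p^{-\infty})$ on $U'$ in every $\cC^\ell$-topology. Covering $U'$ by finitely many such balls $V$ finishes the argument.

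The alternative, more self-contained route, in case one does not want to invoke the off-diagonal expansion directly, is the following localization via $\overline\partial$: pick $S\in H^0(X,L^p,h_1^p,\omega^n/n!)$ with $\|S\|_{1,p}=1$, multiply by a cutoff $\chi$ equal to $1$ on $U'$ and supported in $U$, and correct $\chi S$ by solving $\overline\partial u=\overline\partial(\chi S)=(\overline\partial\chi)S$ with the H\"ormander/Demailly estimate of Theorem \ref{T:db} with respect to $h_2$; since $\overline\partial\chi$ is supported in $U\setminus U'$ where $h_1=h_2$ and where $|S|_{h_1}=|S|_{h_2}$ is already $O(p^{-\infty})$ by an interior sub-averaging estimate (the Bergman kernel $P_p^{(1)}$ is uniformly bounded on the compact set $\overline U$, and separated from it, $S$ has small mass), one produces a section $S'=\chi S-u\in H^0(X,L^p,h_2^p,\omega^n/n!)$ agreeing with $S$ on $U'$ up to $O(p^{-\infty})$, giving $P_p^{(1)}(z,z)\leq P_p^{(2)}(z,z)+O(p^{-\infty})$ on $U'$, and symmetrically the reverse inequality; the off-diagonal statement and the $\cC^\ell$-control then follow from the diagonal one by the standard polarization/Cauchy-estimate trick for reproducing kernels. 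I expect the main obstacle to be purely bookkeeping: making sure that the $O(p^{-\infty})$ estimates are \emph{uniform with all derivatives} on the compact subsets of $U$ — this requires either citing the $\cC^\ell$-version of the Hsiao–Marinescu expansion (which the paper explicitly does) or, in the $\overline\partial$-route, upgrading the $L^2$-estimates to $\cC^\ell$ via elliptic regularity and interior Cauchy estimates on a shrinking sequence of neighborhoods. Given that the paper states Theorem \ref{T:bkloc} is ``a straightforward consequence of \cite{HsM:14}'', I would take the first route and keep the proof short.
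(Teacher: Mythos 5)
Your first route is essentially the paper's own proof: the paper likewise restricts to a small open set $D\subset U$ with a holomorphic frame and invokes \cite[Theorems 9.2 and 3.12]{HsM:14} to write the localized Bergman projection kernel, up to $O(p^{-\infty})$, as a model kernel $e^{ip\Psi(z,w)}b(z,w,p)$ whose phase $\Psi$ and symbol $b$ are constructed solely from the local data $\big(\varphi,\omega\big)\big|_D$, which coincide for $h_1$ and $h_2$, so the two kernels differ by $O(p^{-\infty})$ in every $\cC^\ell$-topology. One small correction: off the diagonal the expansion is not a bare power series $p^{n}\sum_{r}b_r(z,w)p^{-r}$ in the weighted trivialization but carries the locally determined phase factor $e^{ip\Psi(z,w)}$, which does not affect the argument (whereas your fallback $\db$-route, as sketched, would only yield polynomially small errors unless one additionally proves off-diagonal rapid decay, since the extremal section's mass on $U\setminus U'$ is not $O(p^{-\infty})$ for free).
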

\begin{proof}
The proof follows essentially from the analysis in \cite{HsM:14} 
(see also \cite{HsM:16}).
Let $h_0$ be any singular Hermitian metric on $L$, smooth on $X\setminus\Sigma$
and satisfying $c_1(L,h_0)\geq\varepsilon\omega$ in the sense of currents on $X$,
for some $\varepsilon>0$.
Let $P_p^{(0)}$
be the Bergman projection on $H^0(X,L^p,h^p_0,\omega^n/n!)$.

Consider an open set $D\subset U$
such that $L|_D$ is trivial. Let $s:D\to L$ be a holomorphic frame
and let $\varphi\in\cC^\infty(D)$ be the weight of $h_0$ corresponding to $s$,
that is, $|s|_{h_0}=e^{-\varphi}$. 
Let us denote by $\cE'(D)$ the space of distributions with compact support
in $D$ and by $L^2(D)$ the space of square-integrable functions with respect to
the volume form $\omega^n/n!$.
The localized Bergman projection with respect to $s$ is the operator
$P_{p,s}^{(0)}:L^2(D)\cap\cE'(D)\to L^2(D)$, defined by 
$P_{p}^{(0)}(ue^{p\varphi}s^{\otimes p})=P_{p,s}^{(0)}(u)e^{p\varphi}s^{\otimes p}$.
It is easy to see that 
\begin{equation}\label{e:ps0}
P_{p}^{(0)}(z,w)=P_{p,s}^{(0)}(z,w)e^{p(\varphi(z)-\varphi(w))}
s^{\otimes p}(z)\otimes (s^{\otimes p})^*(w) \in L^p_z\otimes (L^p_w)^*\,,\:\:
z,w\in D.
\end{equation}
By \cite[Theorem\,9.2]{HsM:14} the kernel of $P_{p,s}^{(0)}$ satisfies
\begin{equation}\label{e:bl}
P_{p,s}^{(0)}(z,w)=\mathcal{S}_p(z,w)+O(p^{-\infty})\:\:\text{on $D$},
\end{equation}
where $\mathcal{S}_p$ is the localized approximate Szeg\H{o} kernel
defined in \cite[(3.43)]{HsM:14}. Note that by \cite[Theorem\,3.12]{HsM:14} we have
\begin{equation}\label{e:sp}
\mathcal{S}_p(z,w)=e^{ip\Psi(z,w)}b(z,w,p)+O(p^{-\infty})\:\:\text{on $D$},
\end{equation}
where $\Psi:D\times D\to\C$ is a phase function depending on the eigenvalues of $c_1(L,h_0)$
with respect to $\omega$ and described precisely in \cite[Theorem\,3.8]{HsM:14}.
Moreover, $b(\cdot,\cdot,p):D\times D\to\C$ is a semi-classical symbol of order $n=\dim X$,
depending only on the restriction of $h$ and $\omega$ to $D$.

We apply now these results for $h_0=h_1$ and $h_0=h_2$. Since $h_1|_D=h_2|_D$
we deduce that the weight $\varphi$, the phase $\Psi$ and the symbol 
$b(\cdot,\cdot,p)$ above are the
same for $h_1$ and $h_2$. We infer from \eqref{e:bl} and \eqref{e:sp} that
$P_{p,s}^{(1)}(z,w)-P_{p,s}^{(2)}(z,w)=O(p^{-\infty})$ on $D$.
Finally, \eqref{e:ps0} yields $P_{p}^{(1)}(z,w)-P_{p}^{(2)}(z,w)=O(p^{-\infty})$ on $D$.
The proof of Theorem \ref{T:bkloc} is complete.
\end{proof}
We apply now Theorem \ref{T:bkloc} to the metrics $\wi{h}_t$ and $h$, which are equal
on a neigborhood $V$ of $K$ and infer that
\begin{equation}\label{e:sp1}
\wi{P}_{t,p}- P_p=O(p^{-\infty})\:\:\text{locally uniformly on $V$}.
\end{equation}
Combined with \eqref{e:bv}, \eqref{e:sp1} yields \eqref{e:exp3o}.
Finally, \eqref{e:exp3} and \eqref{e:coeff} follow from the expansion
of the Bergman kernel $P_p$ (see \cite[Theorems\,4.1.1--3]{MM07})
or of the singular Bergman kernel (see \cite[Theorem\,1.8]{HsM:14}).

\medskip
\noindent
\textbf{\emph{Acknowledgments.}} We thank the referee for 
stimulating comments.


\def\cprime{$'$}
\providecommand{\bysame}{\leavevmode\hbox to3em{\hrulefill}\thinspace}
\providecommand{\MR}{\relax\ifhmode\unskip\space\fi MR }
\providecommand{\MRhref}[2]{%
  \href{http://www.ams.org/mathscinet-getitem?mr=#1}{#2}
}
\providecommand{\href}[2]{#2}

\end{document}